\newcommand{\cal}{\mathcal}
\newcommand{\R}{{\mathbb R}}
\newcommand{\N}{{\mathbb N}}
\newcommand{\Ha}{{\cal H}}
\newcommand{\eps}{\varepsilon}
\newcommand{\sL}{\cal{L}}
\newcommand{\sS}{\cal{S}}
\newcommand{\sM}{\cal{M}}
\newcommand{\usS}{\overline{\cal S}}
\newcommand{\usM}{\overline{\cal M}}
\newcommand{\lsS}{\underline{\cal S}}
\newcommand{\lsM}{\underline{\cal M}}
\newcommand{\ldim}{\underline{\dim}}
\newcommand{\udim}{\overline{\dim}}
\newcommand{\dbox}{[0,1]^{d-1}}
\newcommand{\bd}{\partial}
\newtheorem{thm}{Theorem}
\newtheorem{prop}[thm]{Proposition}
\newtheorem{lem}[thm]{Lemma}
\newtheorem{cor}[thm]{Corollary}
\newtheorem{rem}[thm]{Remark}
\numberwithin{equation}{section} \numberwithin{thm}{section}
\begin{document}
\title{Lower S-dimension of fractal sets}
\author{Steffen Winter}
\address{Karlsruhe Institute of Technology, Department of Mathematics, 76128 Karlsruhe, Germany}
\thanks{The author was supported by a cooperation grant of the Czech and the German science foundation, DFG project no.\ WE 1613/2-1.} 
\date{\today}
\subjclass[2000]{28A75, 28A80}
\keywords{parallel set, surface area, Minkowski content, Minkowski dimension, S-content, S-dimension, Cantor set, fractal string, product set, box dimension}

\begin{abstract}
 The interrelations between (upper and lower) Minkowski contents  and (upper and lower) surface area based contents (S-contents) as well as between their associated dimensions have recently been investigated for general sets in $\R^d$ (cf.\ \cite{rw09}). While the upper dimensions always coincide and the upper contents are bounded by each other, the bounds obtained in \cite{rw09} suggest that there is much more flexibility for the lower contents and dimensions.   

We show that this is indeed the case. There are sets whose lower S-dimension is strictly smaller than their lower Minkowski dimension. More precisely, given two numbers $s,m$ with $0<s<m<1$, we construct sets $F$ in $\R^d$ with lower S-dimension $s+d-1$ and lower Minkowski dimension $m+d-1$. In particular, these sets are used to demonstrate that the inequalities obtained in \cite{rw09} regarding the general relation of these two dimensions are best possible. 
\end{abstract}
\maketitle

\section{Introduction}
For a bounded set $A\subset\R^d$ and $r\ge 0$, let 
$$
A_r:=\{x\in\R^d: \inf_{a\in A}|x-a|\le r\}
$$
be the \emph{$r$-parallel set} (or \emph{$r$-neighbourhood}) of $A$. Write $V(A_r):=\lambda_d(A_r)$ for the volume 
of $A_r$ and $\Ha^{d-1}(\bd A_r)$ for the surface area of its boundary. ($\lambda_d$ is the Lebesgue measure and $\Ha^t$ the $t$-dimensional Hausdorff measure.) 
Recall that the \emph{$s$-dimensional lower and upper Minkowski content} of $A$ are defined by
\[
\underline{\cal M}^s(A):=\liminf_{r\to 0} \frac{V(A_r)}{\kappa_{d-s}r^{d-s}} \quad \text{ and } \quad 
\overline{\cal M}^s(A):=\limsup_{r\to 0} \frac{V(A_r)}{\kappa_{d-s}r^{d-s}},
\] 
where 
$\kappa_t:=\pi^{t/2}/\Gamma(1+\frac t2)$. For integers $t$, $\kappa_t$ is the volume of the unit ball in $\R^t$.
If $\underline{\cal M}^s(A)=\overline{\cal M}^s(A)$, then the common value ${\cal M}^s(A)$ is the \emph{$s$-dimensional Minkowski content} of $A$. 
Denote by 
\begin{align*}
\underline{\dim}_M A:=\inf\{t\ge 0 : \underline{\cal M}^s(A)=0\}
&&\text{ and }
&&\overline{\dim}_M A:=\inf\{t\ge 0 :\overline{\cal M}^s(A)=0\}
\end{align*}
the \emph{lower} and \emph{upper Minkowski dimension} of $A$. If both numbers coincide, the common value $\dim_M A$ is the \emph{Minkowski dimension} of $A$. It is well known that the Minkowski dimension coincides with the box counting dimension, cf.\ for instance \cite{fal1} or \cite{mattila}. 
See also the beginning of Section~\ref{proof2} for alternative definitions of $\dim_M$. 

Minkowski contents and Minkowski dimension have many applications, for instance in the theory of fractal strings and sprays, where the spectral properties of a domain have been shown to be deeply connected with 
the Minkowski content of its boundary, see \cite{FGCD} and the references therein; and in the study of singular integrals, cf.~\cite{zubrinic}. Box counting methods are widely used in the applied sciences to estimate the fractal dimension, i.e. $\dim_M$, of 'rough' objects, cf.~\cite{fal1}. Some variant of the Minkowski content has been proposed as a texture parameter (\emph{lacunarity}) for finer classifications, cf.~\cite{mandelbrot}. It seems therefore of vital interest to illuminate further the geometric meaning and the mathematical properties of Minkowski contents, for instance by providing alternative definitions and studying related concepts. 

One of these is the notion of \emph{S-content} (or \emph{surface area based content}), arising when in the definition of the Minkowski content the volume $V(A_r)$  is replaced with the surface area $\Ha^{d-1}(\bd A_r)$. It was studied in \cite{rw09}. 
For $0\leq s<d$, let
\[
\underline{\cal S}^s(A):=\liminf_{r\to 0} \frac{\Ha^{d-1}(\bd A_r)}{(d-s)\kappa_{d-s}r^{d-1-s}} 
\quad \text{ and } \quad 
\overline{\cal S}^s(A):=\limsup_{r\to 0} \frac{\Ha^{d-1}(\bd A_r)}{(d-s)\kappa_{d-s}r^{d-1-s}}
\]  
denote the \emph{lower} and \emph{upper $s$-dimensional S-content} of $A$. If both numbers coincide,  
the common value ${\cal S}^s(A)$ is the \emph{($s$-dimensional) S-content} of $A$. For convenience, we set ${\cal S}^d(A):=0$ (which is well motivated by the fact that $\lim_{r\to 0} r \Ha^{d-1}(\bd A_r)=0$, cf.~\cite[p.4]{rw09}). The numbers 
\begin{align*}
\underline{\dim}_S A:=
\inf\{t \ge 0: \underline{\cal S}^t(A)=0\}
\quad \text{ and } \quad
\overline{\dim}_S A:=
\inf\{t\ge 0: \overline{\cal S}^t(A)=0\}
\end{align*}
are the \emph{lower and upper S-dimension} of $A$, respectively, and, if they coincide, the common value $\dim_S A$ will be called \emph{S-dimension} of the set $A$.  

The S-content is not only a natural counterpart to the Minkowski content. Both contents appear as special cases in the framework of fractal curvatures.  More precisely, Minkowski content and S-content are (up to normalization) the  fractal curvatures of order $d$ and $d-1$, whenever the respective limits exist. Fractal curvature measures have been introduced as a generalization of curvature measures to very singular sets by means of approximation with parallel sets. The fractal curvatures are the total masses of these measures. 
They form a set of $d+1$ parameters characterizing the geometry of fractal sets beyond dimension, see \cite{winter, survey, zaehle} for definitions and more details. 

Based on the fundamental observation that the boundary surface area of $A_r$ is the derivative of its volume, cf.\ Stacho \cite{stacho}, it has been investigated in \cite{rw09} under which assumptions Minkowski content and S-content coincide. 
In particular, the following results have been obtained regarding the general relation between Minkowski contents and S-contents.
\begin{thm} \cite[Cor.~3.2 and 3.6]{rw09} \label{thmU}\\
Let $A\subset\R^d$ be a compact set with $V(A)=0$. Then, for $0\le s\le d$,
\begin{align}  \label{thmU-eq}
\frac{d-s}d \usS^s(A)\le \usM^s(A) \le \usS^s(A).
\end{align}
Consequently, $\udim_S A= \udim_M A$.
\end{thm}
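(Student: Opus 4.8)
\emph{Overview and the integral representation.} The plan is to read off both inequalities from two properties of the parallel volume function $\phi(r):=V(A_r)$. By Stach\'o's theorem $\phi$ is locally Lipschitz and satisfies $\phi'(t)=\Ha^{d-1}(\bd A_t)$ for all but countably many $t>0$; being absolutely continuous and starting at $\phi(0)=V(A)=0$ by hypothesis, it admits the representation
\[
V(A_r)=\int_0^r \Ha^{d-1}(\bd A_t)\,dt, \qquad r>0.
\]
For the upper bound $\usM^s(A)\le\usS^s(A)$ (take $0\le s<d$; the case $s=d$ is handled by the convention $\mathcal S^d:=0$ together with $V(A)=0$), fix $c>\usS^s(A)$ and choose $\delta>0$ so that $\Ha^{d-1}(\bd A_t)\le c\,(d-s)\kappa_{d-s}t^{d-1-s}$ for $t<\delta$. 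Integrating this bound over $(0,r)$ with $r<\delta$ gives $V(A_r)\le c\,\kappa_{d-s}r^{d-s}$, hence $V(A_r)/(\kappa_{d-s}r^{d-s})\le c$; passing to the $\limsup$ and then letting $c\downarrow\usS^s(A)$ yields the claim.

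\emph{The monotonicity and the lower bound.} The crux is the pointwise estimate
\[
r\,\Ha^{d-1}(\bd A_r)\le d\,V(A_r), \qquad r>0,
\]
equivalently the assertion that $r\mapsto r^{-d}V(A_r)$ is non-increasing. I would obtain this from the classical Kneser inequality for parallel volumes, which in the present normalization $V(A)=0$ reads $V(A_{\mu r})\le\mu^d V(A_r)$ for $\mu\ge1$; since $\mu\mapsto\mu^d V(A_r)-V(A_{\mu r})$ is nonnegative and vanishes at $\mu=1$, its right derivative there is nonnegative, which (using Stach\'o to identify the derivative) is exactly the displayed inequality. Granting it, divide by $(d-s)\kappa_{d-s}r^{d-1-s}>0$ to get
\[
\frac{\Ha^{d-1}(\bd A_r)}{(d-s)\kappa_{d-s}r^{d-1-s}}\le\frac{d}{d-s}\cdot\frac{V(A_r)}{\kappa_{d-s}r^{d-s}},
\]
and take $\limsup_{r\to0}$ to arrive at $\usS^s(A)\le\frac{d}{d-s}\usM^s(A)$. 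I expect the main obstacle to be justifying this monotonicity at the level of arbitrary compact sets without appealing to Kneser: one would analyze the nearest-point retraction $y\mapsto a(y)+\mu^{-1}(y-a(y))$, where $a(y)$ is the (almost everywhere unique) metric projection of $y$ onto $A$, and show that its tangential Jacobian is bounded below by $\mu^{-(d-1)}$. This in turn rests on the curvature structure of the Lipschitz hypersurfaces $\bd A_t$ (principal curvatures at level $t$ cannot exceed $1/t$, since $A_t$ is a union of balls of radius $t$), and so requires the rectifiability and normal-bundle machinery for parallel sets.

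\emph{Equality of the upper dimensions.} For $0\le s<d$ the factor $(d-s)/d$ is strictly positive, so the two inequalities force $\usM^s(A)=0$ if and only if $\usS^s(A)=0$. Hence the sets $\{s\ge0:\usM^s(A)=0\}$ and $\{s\ge0:\usS^s(A)=0\}$ coincide on $[0,d)$; each is upward closed and each contains $d$ (for $\usM$ because $V(A_r)\to V(A)=0$, for $\usS$ by the convention $\mathcal S^d(A)=0$), so both dimensions are at most $d$ and the infima defining them agree. Therefore $\udim_S A=\udim_M A$.
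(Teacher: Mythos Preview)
The paper does not prove this theorem; it is quoted verbatim from \cite{rw09} (Corollaries~3.2 and~3.6 there) and used only as background, so there is no in-paper argument to compare against.

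That said, your outline is the standard route and is essentially the one taken in \cite{rw09}. Stach\'o's differentiation theorem yields the integral representation $V(A_r)=\int_0^r \Ha^{d-1}(\bd A_t)\,dt$ (this is precisely where the hypothesis $V(A)=0$ is used), and integrating a pointwise bound on the integrand gives $\usM^s(A)\le\usS^s(A)$. The reverse inequality comes from the Kneser monotonicity of $r\mapsto r^{-d}V(A_r)$, which upon differentiation produces $r\,\Ha^{d-1}(\bd A_r)\le d\,V(A_r)$; dividing and taking the $\limsup$ gives the left inequality in \eqref{thmU-eq}. The dimension statement then follows exactly as you say.

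Two minor points. First, the Kneser inequality $V(A_{\mu r})\le\mu^d V(A_r)$ for $\mu\ge1$ is valid for arbitrary bounded sets, so your qualifier ``in the present normalization $V(A)=0$'' is not needed there. Second, your differentiation argument literally yields $r\,\Ha^{d-1}(\bd A_r)\le d\,V(A_r)$ only at those $r$ where $\phi=V(A_\cdot)$ is differentiable, i.e., off a countable set; since $\usS^s$ is a $\limsup$ over \emph{all} $r\to0$, one should say why the exceptional $r$'s cause no trouble. This is handled in \cite{rw09,stacho} by observing that the one-sided derivatives of $\phi$ exist everywhere and dominate $\Ha^{d-1}(\bd A_r)$, so the pointwise inequality in fact holds for every $r>0$. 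With that patch your proof is complete.
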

Note that the left inequality in \eqref{thmU-eq} remains valid for sets $A$ with $V(A)>0$, while the right inequality may fail in this case and the upper S-dimension may be strictly smaller than the upper Minkowki dimension. 
The inequalities obtained in \cite{rw09} for the lower contents and dimensions are much weaker:
\begin{thm} \cite[ Cor.~3.2 and Prop.~3.7]{rw09} \label{thmL}\\
Let $A\subset\R^d$ be a compact set with $V(A)=0$. Then, for $0\le s\le d$,
\begin{align}\label{thmL-eq1}
c \left(\lsM^{s\frac d {d-1}}(A)\right)^{\frac{d-1}d} \le \lsS^s(A)\le \lsM^s(A),
\end{align}
where $c$ is an (explicitely known) constant depending only on $d$ and $s$. Consequently,
\begin{align}\label{thmL-eq2}
\frac{d-1}{d} \ldim_M A\le \ldim_S A\le \ldim_M A.
\end{align}
\end{thm}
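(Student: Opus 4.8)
The plan is to derive both content inequalities in \eqref{thmL-eq1} from two ingredients: Stacho's differentiation formula, which gives $\frac{d}{dr}V(A_r)=\Ha^{d-1}(\bd A_r)$ for a.e.\ $r>0$ together with the absolute continuity of $r\mapsto V(A_r)$, and the classical isoperimetric inequality. Throughout I abbreviate $V(r):=V(A_r)$ and $S(r):=\Ha^{d-1}(\bd A_r)$, and I record that since $V(A)=0$ Stacho's result yields the representation $V(r)=\int_0^r S(\rho)\,d\rho$.

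For the right-hand inequality $\lsS^s(A)\le\lsM^s(A)$ I would use a simple averaging argument. Writing $g(\rho):=\frac{S(\rho)}{(d-s)\kappa_{d-s}\rho^{d-1-s}}$, the measure $(d-s)\kappa_{d-s}\rho^{d-1-s}\,d\rho$ on $(0,r)$ has total mass $\kappa_{d-s}r^{d-s}$, so
$$\frac{V(r)}{\kappa_{d-s}r^{d-s}}=\frac{1}{\kappa_{d-s}r^{d-s}}\int_0^r g(\rho)\,(d-s)\kappa_{d-s}\rho^{d-1-s}\,d\rho$$
is a weighted average of $g$ over $(0,r)$. Given $\ell'<\lsS^s(A)=\liminf_{\rho\to0}g(\rho)$, one has $g(\rho)>\ell'$ on some interval $(0,r_0)$, whence the displayed average exceeds $\ell'$ for every $r<r_0$; letting $r\to0$ and then $\ell'\uparrow\lsS^s(A)$ gives $\lsM^s(A)\ge\lsS^s(A)$.

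The left-hand inequality is where the real content lies, and the key observation is that the surface area of the parallel set can be bounded below by its own volume via isoperimetry: for a.e.\ $r>0$ the set $A_r$ has finite perimeter $P(A_r)\le\Ha^{d-1}(\bd A_r)$, so
$$S(r)\ge d\,\kappa_d^{1/d}\,V(r)^{(d-1)/d}.$$
Dividing by $(d-s)\kappa_{d-s}r^{d-1-s}$ and writing $V(r)^{(d-1)/d}=\big(V(r)/r^{d-t}\big)^{(d-1)/d}\,r^{(d-t)(d-1)/d}$, the residual power of $r$ is $\frac{(d-t)(d-1)}{d}-(d-1-s)$, which vanishes precisely for $t=\frac{sd}{d-1}$. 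With this choice the $r$-factor disappears and, after reintroducing the normalising constant $\kappa_{d-t}$, one obtains
$$\frac{S(r)}{(d-s)\kappa_{d-s}r^{d-1-s}}\ge c\left(\frac{V(r)}{\kappa_{d-t}r^{d-t}}\right)^{(d-1)/d},\qquad c:=\frac{d\,\kappa_d^{1/d}\,\kappa_{d-t}^{(d-1)/d}}{(d-s)\kappa_{d-s}}.$$
Since $x\mapsto x^{(d-1)/d}$ is continuous and increasing, taking $\liminf_{r\to0}$ commutes with it and yields $\lsS^s(A)\ge c\,(\lsM^{t}(A))^{(d-1)/d}$, which is \eqref{thmL-eq1}.

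Finally, the dimension statement \eqref{thmL-eq2} follows formally from the content inequalities via the threshold definitions of the two lower dimensions. The bound $\lsS^s\le\lsM^s$ shows that $\lsM^s(A)=0$ forces $\lsS^s(A)=0$, so $\{t:\lsS^t(A)=0\}\supseteq\{t:\lsM^t(A)=0\}$ and hence $\ldim_S A\le\ldim_M A$. Conversely, if $\lsS^s(A)=0$ then the left inequality forces $\lsM^{sd/(d-1)}(A)=0$, so $\ldim_M A\le \frac{sd}{d-1}$, i.e.\ $s\ge\frac{d-1}{d}\ldim_M A$; taking the infimum over all such $s$ gives $\ldim_S A\ge\frac{d-1}{d}\ldim_M A$. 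The main obstacle I anticipate is the regularity input: justifying the representation $V(r)=\int_0^r S(\rho)\,d\rho$ and the applicability of the isoperimetric inequality to $A_r$ (finite perimeter, with $P(A_r)\le\Ha^{d-1}(\bd A_r)$), both of which rest on Stacho's results and on the a.e.\ Lipschitz regularity of parallel boundaries rather than on any further estimate.
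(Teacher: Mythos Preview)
The paper does not actually prove Theorem~\ref{thmL}: it is quoted verbatim from \cite[Cor.~3.2 and Prop.~3.7]{rw09} as background for the constructions that follow, and no argument is reproduced here. Hence there is no proof in the present text against which to compare your proposal.

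That said, your outline is the natural one and matches the ingredients one expects behind the cited result: Stach\'o's theorem gives the Kneser-type absolute continuity of $r\mapsto V(A_r)$ with a.e.\ derivative $\Ha^{d-1}(\bd A_r)$, so that $V(r)=\int_0^r S(\rho)\,d\rho$ when $V(A)=0$; the averaging argument then yields the right inequality in \eqref{thmL-eq1}, while the isoperimetric inequality applied to $A_r$ produces the left one with the exponent $t=sd/(d-1)$ you computed. Two small caveats worth making explicit. First, the isoperimetric step needs $A_r$ to have finite perimeter with $P(A_r)\le\Ha^{d-1}(\bd A_r)$; this holds for Lebesgue-a.e.\ $r>0$ (e.g.\ via the coarea formula for the distance function, which also underlies Stach\'o's result), and a.e.\ validity is enough for the $\liminf$. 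Second, the left inequality is only informative for $0\le s\le d-1$, since for $s>d-1$ one has $t=sd/(d-1)>d$ and the lower Minkowski content on the left vanishes trivially; this does not affect the dimension consequence \eqref{thmL-eq2}, because $\tfrac{d-1}{d}\ldim_M A\le d-1$ anyway.
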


Combining the above theorems, it follows immediately, that the existence of the S-content implies the existence of the Minkowski content and both notions coincide (for sets in $\R^d$ with $V(A)=0$). If lower and upper S-content differ, the situation is more delicate.
In \cite[cf.\ Example 3.3]{rw09}, the Sierpinski gasket has been discussed, which shows that the lower S-content can be strictly smaller than the lower Minkowski content. The lower dimensions coincide in this case, in fact, the dimensions exist and coincide. 
However, the inequalities in \eqref{thmL-eq2} suggest that either they can be improved (to equality for the lower dimensions) or there are sets whose lower S-dimension is strictly smaller than their lower Minkowski dimension. This was one of the most pressing questions left open in \cite[cf.\ the second Remark on p.10]{rw09}. 

In this note we show that for any $d\in\N$ there exist sets $A\subset\R^d$ with  $\ldim_S A< \ldim_M A$ and, moreover, that the lower S-dimension can assume any value between the upper and the lower bound given in \eqref{thmL-eq2}, showing, in particular, that these bounds are optimal. The essential construction is done for $d=1$ using the concept of fractal strings, which goes back to \cite{LapPo1}, see also the monograph \cite{FGCD}. The result in higher dimensions is based on a Cartesian product argument.
The paper is organized as follows. In the next section, the sets are constructed and the main results are stated. The proof for $d=1$ is discussed in Section \ref{proof1} and for $d\ge2$ in Section \ref{proof2}, where also some more general statements regarding the S-dimension of product sets are derived.

\section{Main results} \label{results}
Let two numbers $s,m$ be given with $0<s<m<1$. Set $q:=1+\frac{1}{s}-\frac 1m$. Let ${\sL}=\sL(s,m)=(l_j)_{j=1}^\infty$ be the \emph{fractal string} (i.e., a nonincreasing sequence of nonnegative real numbers; cf.~\cite[p.1]{FGCD}) containing $[2^{q^{k+1}\cdot s}]$ times the ``length'' $2^{-q^k}$, $k=1,2,\ldots$, where $[x]$ denotes the integer part of a number $x\in\R$.  
Observe that 
\[
L:=\sum_{j=1}^\infty l_j = \sum_{k=1}^\infty [2^{q^{k+1}\cdot s}]\cdot 2^{-q^k}\le \sum_{k=1}^\infty 2^{q^{k+1}\cdot s}\cdot 2^{-q^k}=\sum_{k=1}^\infty 2^{q^{k}(q\cdot s-1)}<\infty,
\]
since $q\cdot s=1+s-\frac sm<1$. Hence $\sL$ has a geometric realization as a union of disjoint open intervals $I_j$ of lengths $l_j$ in $\R$ such that the total length $\lambda_1(\Omega)$ of $\Omega:=\bigcup_{j=1}^\infty I_j$ is finite. For simplicity, we assume that the $I_j$ are all subsets of some open interval $I$ of length $L$. (Note that the term \emph{fractal string} is also frequently used for the set $\Omega$, cf.\ e.g.~\cite[p.9]{FGCD}.)
  
Let $F=F(s,m)$ denote the boundary of (an arbitrary but fixed) geometric realization $\Omega$ of $\sL$ in $I$, i.e., $F=\partial \Omega$. Note that the latter assumption implies $\overline{I}=\Omega\cup F$ and $\lambda_1(F)=0$.

\begin{thm}\label{thm:F}
For $0<s<m<1$, the set $F=F(s,m)\subset\R$ has lower S-dimension $\ldim_S F= s$ and lower Minkowski dimension $\ldim_M F=m$. Moreover, the upper Minkowski and S-dimension of $F$ are given by 
$$
\udim_M F=\udim_S F = s\cdot q = 1+s-\frac sm.
$$  
\end{thm}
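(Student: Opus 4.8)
The plan is to reduce everything to a counting problem governed by the length multiset of $\sL$, and then to track two competing contributions to the parallel volume that the surface area does not see. Since $F=\bd\Omega$ and each gap $I_j=(a,b)$ of $\Omega$ contains no point of $F$ in its interior, the $r$-neighbourhood $F_r$ meets $I_j$ in $[a,a+r]\cup[b-r,b]$ when $l_j>2r$ (leaving exactly one hole $(a+r,b-r)$, since every point of $F$ outside $I_j$ is at distance $>r$ from it) and swallows it entirely when $l_j\le 2r$. Writing $N(r):=\#\{j:l_j>2r\}$ and $T(r):=\sum_{j:l_j\le 2r}l_j$, this gives, up to bounded resp.\ linear-in-$r$ boundary corrections,
\[
\Ha^0(\bd F_r)=2N(r)+O(1),\qquad V(F_r)=2r\,N(r)+T(r)+O(r).
\]
In particular both quantities depend only on the multiset of lengths, so the dimensions are independent of the chosen realization $\Omega$.

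Next I would evaluate $N$ and $T$ for $\sL$. Put $t:=\log_2\frac1{2r}$ and let $K=K(t)$ be determined by $t\in(q^K,q^{K+1})$. Because the length $2^{-q^k}$ occurs $[2^{q^{k+1}s}]$ times and $2^{-q^k}>2r\iff q^k<t\iff k\le K$, super-geometric growth makes the relevant sums collapse to a single dominant term: $N(r)\asymp 2^{q^{K+1}s}$ (largest term $k=K$), and, since $qs=1+s-\tfrac sm<1$, $T(r)\asymp\sum_{k\ge K+1}2^{q^k(qs-1)}\asymp 2^{q^{K+1}(qs-1)}$ (first term $k=K+1$); the integer parts only alter these by a factor tending to $1$.

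For the S-content the normalized ratio is $\asymp N(r)\,r^{\sigma}\asymp 2^{q^{K+1}s-\sigma t}$, monotone decreasing in $t$ on each period. Its period-infimum, attained as $t\to q^{K+1}$, equals $2^{q^{K+1}(s-\sigma)}$, while its period-supremum, at $t\to q^{K}$, equals $2^{q^{K}(qs-\sigma)}$; letting $K\to\infty$ gives $\lsS^{\sigma}(F)=\infty$ or $0$ according as $\sigma<s$ or $\sigma>s$, hence $\ldim_S F=s$, and likewise $\usS^{\sigma}(F)$ switches at $\sigma=qs$, so $\udim_S F=qs$. For the Minkowski content the normalized ratio $\asymp(2r\,N(r)+T(r))\,r^{\mu-1}$ is, on each period, the sum of a decreasing branch $2^{q^{K+1}s-\mu t}$ (from $2rN$) and an increasing branch $2^{q^{K+1}(qs-1)+(1-\mu)t}$ (from $T$), meeting at the crossover $t^\ast=q^{K+1}\frac sm$. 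The period-minimum is therefore attained at $t^\ast$, with value $2^{q^{K+1}s(1-\mu/m)}$, giving $\lsM^{\mu}(F)=\infty$ or $0$ according as $\mu<m$ or $\mu>m$, i.e.\ $\ldim_M F=m$; the period-maxima sit at the endpoints with value $2^{q^{K+1}(qs-\mu)}$, giving $\udim_M F=qs$ (consistent with Theorem~\ref{thmU}, which already forces $\udim_S=\udim_M$).

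The conceptual heart, and the step I expect to be most delicate, is that the liminf of the parallel volume is driven by the crossover $t^\ast$, where the accumulated tail $T$ of very small gaps overtakes the pure count $2rN$, producing the exponent $m$; the surface area never feels $T$, so its liminf sits at the period end and produces the strictly smaller exponent $s$. Making this rigorous requires checking that the minimum genuinely lies inside each period, i.e.\ $q^K<t^\ast<q^{K+1}$, which reduces to $(1-m)(m-s)>0$ and holds precisely because $0<s<m<1$; one must also verify the monotonicity of each branch and control the period-boundary jumps of the step function $N(r)$, so that the claimed infima and suprema really are the $\liminf$ and $\limsup$ over $r\to 0$.
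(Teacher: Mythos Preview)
Your argument is correct and follows essentially the same route as the paper: both reduce to the count $N(r)$ and the tail $T(r)$, use the super-geometric collapse of the sums (the paper formalizes this as Lemmas~\ref{lem:summe} and \ref{lem:reihe}), and locate the Minkowski liminf at an interior minimum on each period $2r\in[r_k,r_{k-1})$, checking that this minimum genuinely falls inside the period. The only cosmetic differences are that the paper works in the variable $r$ and finds the exact minimizer of $r\mapsto r^t M_k + r^{t-1}L_k$ via calculus (Lemma~\ref{lem:minimum}), obtaining the precise values of $\lsS^s(F)$, $\usS^{sq}(F)$ and $\lsM^m(F)$, whereas you work in $t=\log_2(1/2r)$ and use the crossover of the two branches together with $\asymp$-estimates, which suffices for the dimensions but does not yield the content values.
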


For $d=1,2,\ldots$, let $F_d=F_d(s,m):=F(s,m)\times [0,1]^{d-1}\subset\R^d$ be the Cartesian product of the set $F$ and the $(d-1)$-dimensional unit cube $[0,1]^{d-1}$.  
\begin{thm} \label{thm:Fd}
For $0<s<m<1$ and $d\in\N$, the set $F_d=F_d(s,m)\subset\R^d$ has lower S-dimension $\ldim_S F_d = s + d -1$ and lower Minkowski dimension $\ldim_M F_d = m+ d-1$.
The upper Minkowski and S-dimension of $F_d$ are given by 
$$
\udim_M F_d=\udim_S F_d = s\cdot q+d-1 = d+s-\frac sm.
$$   
\end{thm}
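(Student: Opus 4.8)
The plan is to reduce everything to the one–dimensional case, Theorem~\ref{thm:F}, via Cartesian product arguments, the guiding idea being that the second factor $C:=\dbox$ is a full-dimensional ``nice'' set whose own boundary effects are negligible. Throughout, $F_d=F\times C$ is compact with $\lambda_d(F_d)=\lambda_1(F)\cdot\lambda_{d-1}(C)=0$, so Theorem~\ref{thmU} applies.

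First I would treat the volume, which controls both Minkowski dimensions and, through Theorem~\ref{thmU}, both upper dimensions. Since the metric on $\R^d=\R\times\R^{d-1}$ splits, a point $(x,y)$ lies in $(F_d)_r$ iff $\dist(x,F)^2+\dist(y,C)^2\le r^2$, and this yields the elementary sandwich
\begin{align}\label{plan-sandwich}
F_r\times C\ \subseteq\ (F_d)_r\ \subseteq\ F_r\times C_r.
\end{align}
Taking $\lambda_d$ and using $\lambda_{d-1}(C_r)=1+O(r)$ gives $V((F_d)_r)=V(F_r)\,(1+O(r))$. Because the exponents match ($d-(t+d-1)=1-t$), the normalized quotients defining $\lsM^{t+d-1}(F_d)$ and $\lsM^{t}(F)$ (and likewise the upper versions) coincide up to the factor $1+O(r)\to1$, whence $\lsM^{t+d-1}(F_d)=\lsM^{t}(F)$ and $\usM^{t+d-1}(F_d)=\usM^{t}(F)$ for every $t$. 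This gives $\ldim_M F_d=\ldim_M F+(d-1)=m+d-1$ and $\udim_M F_d=\udim_M F+(d-1)=sq+d-1$, and by Theorem~\ref{thmU} also $\udim_S F_d=\udim_M F_d=sq+d-1$, settling all but the lower S-dimension.

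The crux is the lower S-dimension, for which I would prove that the surface areas are comparable,
\begin{align}\label{plan-surf}
\Ha^{d-1}(\bd (F_d)_r)=\Ha^{0}(\bd F_r)\,(1+o(1))\qquad(r\to0),
\end{align}
where $\Ha^0(\bd F_r)$ is precisely the $0$-dimensional surface area entering the S-content of $F$ in $\R$. With $t'=t+d-1$ the normalizing factors $(d-t')\kappa_{d-t'}r^{d-1-t'}$ and $(1-t)\kappa_{1-t}r^{-t}$ again agree, so \eqref{plan-surf} forces $\lsS^{t'}(F_d)$ and $\lsS^{t}(F)$ to vanish simultaneously for every $t$, giving $\ldim_S F_d=\ldim_S F+(d-1)=s+d-1$. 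To establish \eqref{plan-surf} I would split $\bd(F_d)_r$ by the second coordinate. Over the \emph{core} $\{y:\dist(y,\bd C)>r\}$ the slice of $(F_d)_r$ is exactly $F_r$, so this part of the boundary is precisely $\bd F_r\times\{y:\dist(y,\bd C)>r\}$, of measure $\Ha^0(\bd F_r)\,(1-2r)^{d-1}$; this yields the lower bound in \eqref{plan-surf}. For the matching upper bound I would use subadditivity of boundary measure over the connected components $K$ of $F_r$, writing $(F_d)_r=\bigcup_K((F\cap K)\times C)_r$ and bounding each $\Ha^{d-1}(\bd((F\cap K)\times C)_r)$ by its lateral face ($=\Ha^0(\bd K)=2$) plus the contributions along $\bd C$ and the rounded edges, which are $O(|K|)+O(r)$.

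The main obstacle is exactly this last estimate: unlike the volume, surface area does \emph{not} factor, so one must control the ``non-product'' part of $\bd(F_d)_r$ created near $\R\times\bd C$ (the rounded caps and edges), as well as the possible interaction between parallel sets of neighbouring components $K$ (which can come arbitrarily close when a gap of $F$ has length just above $2r$); the subadditivity step absorbs this interaction into the upper bound. Summation over components turns the correction into $O\!\big(\sum_K|K|\big)+O\!\big(r\,\#\{K\}\big)=O(\lambda_1(F_r))+O(r\,\Ha^0(\bd F_r))$, and since $\lambda_1(F_r)\to0$ while $\Ha^0(\bd F_r)\to\infty$, both terms are $o(\Ha^0(\bd F_r))$, establishing \eqref{plan-surf}. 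I would isolate the product formulas $\ldim_{M}(A\times C)=\ldim_{M}A+(d-1)$ and $\ldim_{S}(A\times C)=\ldim_{S}A+(d-1)$ (valid for any nonempty compact $A\subset\R$ with $\lambda_1(A)=0$) as separate lemmas, since they carry the entire argument and the explicit values $m+d-1$, $s+d-1$ and $sq+d-1$ then follow at once from Theorem~\ref{thm:F}.
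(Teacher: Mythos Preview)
Your plan is correct and follows essentially the same route as the paper: the Minkowski dimensions and the upper S-dimension are handled by a volume comparison together with Theorem~\ref{thmU}, and the lower S-dimension by a two-sided estimate on $\Ha^{d-1}(\bd(F\times C)_r)$ in terms of $\Ha^{0}(\bd F_r)$. The only organizational difference is that for the upper surface bound the paper slices $\bd(F\times C)_r$ over the \emph{gaps} $I_j$ of $F$ (together with $F$ itself and the complement of the convex hull), obtaining exact formulas \eqref{eqn:Ha1}--\eqref{eqn:Ha3}, whereas you regroup the same pieces by connected components $K$ of $F_r$; your per-component estimate $2+O(|K|)+O(r)$ is exactly what the paper's gap formulas give when restricted to $K$, and the summation $\sum_K|K|=\lambda_1(F_r)\to 0$, $r\cdot\#\{K\}=\tfrac{r}{2}\Ha^0(\bd F_r)$ matches the paper's endgame. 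Two minor remarks: the decomposition $(F_d)_r=\bigcup_K((F\cap K)\times C)_r$ is in fact a \emph{disjoint} union (points of $F$ in different components of $F_r$ are at distance $>2r$), so no genuine interaction has to be absorbed by subadditivity; and your volume sandwich \eqref{plan-sandwich} gives the Minkowski dimensions a bit more directly than the paper's combination of box counting (for the upper bounds) and the one-sided inclusion $A_r\times B\subseteq(A\times B)_r$ (for the lower bounds).
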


The proofs of Theorems~\ref{thm:F} and \ref{thm:Fd} are given in Sections~\ref{proof1} and \ref{proof2}, respectively. In the course of the proof of Theorem~\ref{thm:F} we will also derive the precise expressions for the upper and lower contents of the sets $F(s,m)$. The proof of Theorem~\ref{thm:Fd}
is based on some more general statements on the Minkowski and S-dimension of product sets.
 
Now recall from \eqref{thmL-eq2} that, for arbitrary compact sets $A\subset \R^d$, we have
\begin{align*}
\frac{d-1}{d} \ldim_M A\le \ldim_S A\le \ldim_M A.
\end{align*}
The above results clearly show that the lower S-dimension can be strictly smaller than the lower Minkowski dimension, i.e., the right hand side inequality can be strict. This is in sharp contrast to the situation for the upper dimensions, which do always coincide.  
Moreover, the above Theorems show that the constant $\frac{d-1}d$ for the lower bound is optimal: 

\begin{cor}
For any $d\in\N$ and any constant $c$ such that $\frac{d-1}d<c\le 1$ there exists a set $A\subset\R^d$ such that $c\cdot \ldim_M A= \ldim_S A$.
\end{cor}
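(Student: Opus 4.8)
The plan is to obtain the required sets directly from the family $F_d(s,m)$ of Theorem~\ref{thm:Fd}, exploiting the fact that its two lower dimensions $\ldim_S F_d = s+d-1$ and $\ldim_M F_d = m+d-1$ can be prescribed independently by tuning the two parameters. Given $c$ with $\frac{d-1}{d}<c\le 1$, I want to choose $s,m$ with $0<s<m<1$ so that the set $A:=F_d(s,m)$ satisfies $c\cdot\ldim_M A=\ldim_S A$, i.e.\ $c(m+d-1)=s+d-1$. For the boundary value $c=1$ this forces $s=m$, which is excluded, so the argument splits into the two cases $\frac{d-1}{d}<c<1$ and $c=1$.

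For $\frac{d-1}{d}<c<1$, I solve the equation for $s$, setting
\[
s:=c(m+d-1)-(d-1),
\]
and treat $m$ as a free parameter still to be fixed. It then remains to check that $m$ can be chosen so that $0<s<m<1$. The inequality $s<m$ is automatic, since $m-s=(1-c)(m+d-1)>0$ for every admissible $m$. The inequality $s>0$ is equivalent to $m>(d-1)\tfrac{1-c}{c}$, and the set of $m$ satisfying both this lower bound and $m<1$ is nonempty precisely when $(d-1)\tfrac{1-c}{c}<1$, which rearranges to $c>\tfrac{d-1}{d}$ --- exactly our hypothesis. Hence any $m$ in the nonempty interval $\bigl((d-1)\tfrac{1-c}{c},\,1\bigr)$ yields admissible parameters, and $A:=F_d(s,m)$ does the job by Theorem~\ref{thm:Fd}.

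For $c=1$ a single set with $\ldim_S A=\ldim_M A$ suffices, and here the sets $F_d$ are of no use, since they always satisfy $\ldim_S F_d<\ldim_M F_d$; this is the one point requiring a genuinely different example. I would take a set whose S-dimension and Minkowski dimension both exist and coincide --- for instance the Sierpinski gasket discussed in the introduction when $d=2$, or, uniformly in $d$, the product $C\times[0,1]^{d-1}$ of a self-similar Cantor set $C\subset\R$ (for which $\bd C_r$ is a finite set of $\asymp r^{-\alpha}$ points, forcing $0<\lsS^\alpha(C)\le\usS^\alpha(C)<\infty$ and hence $\ldim_S C=\ldim_M C=\alpha$) with the unit cube. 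Invoking the product results for the lower S- and Minkowski dimension derived in Section~\ref{proof2} then gives $\ldim_S A=\ldim_M A=\alpha+d-1$.

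The only genuine obstacle is the case $c=1$, which lies outside the $F_d$ construction; everything for $c<1$ reduces to the elementary feasibility check above, whose sole nontrivial content is the equivalence of the solvability condition with the hypothesis $c>\frac{d-1}{d}$ --- confirming in passing that the lower bound in \eqref{thmL-eq2} is sharp.
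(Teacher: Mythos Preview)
Your proof is correct and takes essentially the same approach as the paper: for $\frac{d-1}{d}<c<1$ both solve $c(m+d-1)=s+d-1$ subject to $0<s<m<1$ and invoke Theorem~\ref{thm:Fd} (the paper simply fixes the particular value $s=c-\tfrac{d-1}{d}$ rather than leaving $m$ free as you do). For $c=1$ the paper cites \cite[Theorem~4.5]{rw09} directly for non-arithmetic self-similar sets in $\R^d$, while your product construction $C\times[0,1]^{d-1}$ via the results of Section~\ref{proof2} is an equally valid alternative.
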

\begin{proof}
The case $c=1$ is not covered by the class of sets above, however, examples of such sets are known. For instance, if $F$ is any non-arithmetic self-similar set in $\R^d$ satisfying the open set condition and with similarity dimension $D<d$, then, by \cite[Theorem 4.5]{rw09}, 
$\dim_S F=\dim_M F=D$.
 
Fix $d\in\N$ and $c$ such that $\frac{d-1}d<c<1$. Set $s:=c-\frac{d-1}d$ and $m:=\frac 1c((1-c)(d-1) +s)$. Then $0<s<m<1$ (since $m>mc=(1-c)(d-1)+s>s$ and $mc=(1-c)(d-1)+s<\frac{d-1}d +s= \frac{d-1}d + c -\frac{d-1}d=c$) and so, by Theorem~\ref{thm:Fd},
the set $A:=F_d(s,m)$ has $\ldim_S A=s+d-1$ and $\ldim_M A = m+d-1$. Hence
\[
c\cdot \ldim_M A= c(d-1+m)=c(d-1)+(1-c)(d-1)+s=d-1+s=\ldim_S A,
\]
i.e., the set $A$ satisfies the desired equality.
\end{proof}
\begin{rem}
The class of sets discussed does not provide examples for the case $c=\frac{d-1}d$, i.e., sets $A$ for which the lower bound in \eqref{thmL-eq2} is sharp. Thus the following question remains open: Does there exist a set $A\subset\R^d$ for which $\ldim_S A =\frac{d-1} d \ldim_M A$?  Another open question is, whether $\ldim_S A=\ldim_M A$ implies $\ldim_M A=\udim_M A$ or vice versa, i.e, whether the equivalence of the lower dimensions is related to the existence of the Minkowski dimension in some way. The examples considered so far suggest such a relation, at least they do not disprove it.
\end{rem}

We notice that it is also possible to prescribe lower and upper S-dimension and find a set with these S-dimensions within the class of sets discussed.
\begin{cor}\label{cor2}
Let $0<s<u<1$. There exists a set $A\subset\R^d$ such that $\ldim_S A=s+d-1$ and $\udim_S A=u+d-1$. 
\end{cor}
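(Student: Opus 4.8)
The plan is to realize $A$ as one of the product sets $F_d(s',m')$ furnished by Theorem~\ref{thm:Fd}, with the parameters $s',m'$ chosen so as to match the two prescribed values. By that theorem, $F_d(s',m')$ has lower S-dimension $s'+d-1$ and upper S-dimension $d+s'-\frac{s'}{m'}$, so the task reduces to solving a pair of equations in the two unknowns $s',m'$, subject to the admissibility constraint $0<s'<m'<1$.

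First I would pin down the lower S-dimension. Since $\ldim_S F_d(s',m')=s'+d-1$, the choice $s':=s$ immediately produces the desired value $s+d-1$. Next I would determine $m'$ from the upper S-dimension: imposing $d+s-\frac{s}{m'}=u+d-1$ and solving for $m'$ gives
$$
m'=\frac{s}{1-u+s}.
$$

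It then remains to verify that this pair satisfies $0<s<m'<1$, which is where one invokes the hypothesis $0<s<u<1$. The inequality $m'>s$ is equivalent (after dividing by $s>0$ and clearing the positive denominator $1-u+s$) to $s<u$; the inequality $m'<1$ is equivalent to $u<1$; and positivity of $m'$ follows from $s>0$ together with $1-u+s>0$. All three hold under the stated assumption, so $(s',m')$ is admissible.

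With these choices Theorem~\ref{thm:Fd} yields $\ldim_S A=s+d-1$ and $\udim_S A=d+s-\frac{s}{m'}=u+d-1$, as required. There is no genuine obstacle here beyond the elementary check of the parameter constraints: the entire content lies in the observation that the two-parameter family $F_d(s,m)$ already \emph{decouples} the lower S-dimension (controlled by $s$ alone) from the upper S-dimension (controlled by the combination $s-\frac{s}{m}$), so that the two can be prescribed independently within the given range.
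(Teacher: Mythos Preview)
Your proof is correct and follows the same approach as the paper: the paper likewise sets $s':=s$ and $m':=\frac{s}{1+s-u}$, checks $0<s<m'<1$, and invokes Theorem~\ref{thm:Fd}. The only cosmetic difference is that the paper phrases the upper-dimension computation via $q=1+\tfrac{1}{s}-\tfrac{1}{m'}=\tfrac{u}{s}$, whereas you substitute directly into $d+s-\tfrac{s}{m'}$.
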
 
\begin{proof} Set  $m:=\frac s{1+s-u}$ and note that $s<m<1$.  Let $A:=F_d(s,m)$. We have $q=1+\frac 1s -\frac 1m =1+\frac 1s -\frac{1+s-u}{s} =\frac u s$. Hence, by Theorem~\ref{thm:Fd}, 
$\ldim_S A=s+d-1$ and $\udim_S=qs+d-1=u+d-1$.
\end{proof}
Corollary~\ref{cor2} shows that the difference between the upper and the lower S-dimension of a set in $\R^d$ may be any number between $0$ and $1$. 
For $d=1$ this implies that the trivial lower bound $0=0\cdot\udim_S A\le\ldim_S A$  for $\ldim_S$ in terms of $\udim_S$ is the best possible for general compact sets in $\R$. However, this is also an immediate consequence of  the well known fact that there exist sets $A$ in $\R$ with $\ldim_M A=0$ and $\udim_M A=1$ (taking into account Theorems~\ref{thmU} and \ref{thmL}). Hence there is no general restriction on the difference between upper and lower S-dimension for sets in $\R$ apart from the trivial ones. It remains open whether this difference can be larger for sets in $\R^d$, $d\ge 2$. 

For completeness, we remark that similarly as in Corollary~\ref{cor2} one can also prescribe $\ldim_M$ and $\udim_M$ within $(d-1,d)$ and find a set in $\R^d$ (within the class of sets discussed) with these Minkowski dimensions. 
\begin{cor}
Let $0<m<u<1$. There exists a set $A\subset\R^d$ such that $\ldim_M A=m+d-1$ and $\udim_M A=u+d-1$. 
\end{cor}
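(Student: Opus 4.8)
The plan is to reuse exactly the construction from Theorem~\ref{thm:Fd}, as in the proof of Corollary~\ref{cor2}, and to select the two parameters so that the pair $(\ldim_M,\udim_M)$ of $F_d$ hits the prescribed target. Since Theorem~\ref{thm:Fd} gives $\ldim_M F_d(s,m')=m'+d-1$, the natural first move is to set $m':=m$, so that the lower Minkowski dimension is automatically correct. It then remains to choose the second parameter $s$ so that the upper Minkowski dimension, which Theorem~\ref{thm:Fd} evaluates to $\udim_M F_d(s,m)=d+s-\frac{s}{m}$, equals the prescribed value $u+d-1$.

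Solving the equation $d+s-\frac{s}{m}=u+d-1$, i.e.\ $s\left(1-\frac{1}{m}\right)=u-1$, yields
\[
s=\frac{(1-u)\,m}{1-m}.
\]
I would then verify that this choice of $s$ lies in the admissible range $0<s<m<1$ required by Theorem~\ref{thm:Fd}. Positivity of $s$ is immediate, all three factors $1-u$, $m$ and $1-m$ being positive under the hypotheses $0<m<1$ and $u<1$; the upper bound $m<1$ is given; and the inequality $s<m$ reduces, after dividing by $m>0$, to $\frac{1-u}{1-m}<1$, i.e.\ $u>m$, which holds by hypothesis. With these checks in place, Theorem~\ref{thm:Fd} applied to $A:=F_d(s,m)$ delivers $\ldim_M A=m+d-1$ and, after substituting the chosen $s$, $\udim_M A=d+s-\frac{s}{m}=u+d-1$.

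The only substantive point is the range check $0<s<m<1$, and this is precisely where the standing hypothesis $m<u$ is used; the remainder is a direct application of Theorem~\ref{thm:Fd}. I do not anticipate a real obstacle here, since the formula $\udim_M F_d(s,m)=d+s(1-\tfrac1m)$ is explicit and, because $1-\tfrac1m<0$, strictly decreasing in $s$: as $s$ ranges over $(0,m)$ the value $\udim_M$ sweeps monotonically through the interval $(m+d-1,d)$, which contains the target $u+d-1$ exactly when $m<u<1$. Hence a unique admissible $s$ produces the desired upper dimension, and the construction goes through verbatim as in Corollary~\ref{cor2}.
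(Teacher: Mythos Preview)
Your proposal is correct and follows exactly the route the paper intends: the paper explicitly leaves this proof as an exercise, noting that it is done ``similarly as in Corollary~\ref{cor2}'' within the same class of sets $F_d(s,m)$, and your choice $s=\frac{(1-u)m}{1-m}$ together with the range check $0<s<m$ (equivalent to $m<u$) is precisely that exercise carried out.
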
 
We leave the simple proof as an exercise, also because results of this type are known, cf.\ for instance \cite[Section 5.3, p.77]{mattila} and \cite{zubrinic}. A better result is obtained in \cite[Theorem 1.2]{zubrinic}, which is in fact optimal: It is possible to prescribe numbers $\underline{d}\le \overline{d}$ in $[0,d]$ and find a set $A\subset\R^d$ such that $\ldim_M A=\underline{d}$ and $\udim_M A=\overline{d}$.

We note that fractal strings of a similar type as the ones used here to construct the sets $F(s,m)$ appear in \cite[cf.\ Examples 3.12-3.14]{LapPo1}, where they are used to demonstrate that certain implications in connection with one-sided (lower) estimates generalizing the modified Weyl-Berry conjecture are nonreversible, in general; see \cite[Theorem 3.11]{LapPo1} for more details. It is an interesting question whether (lower) S-contents play a role in this context.

\section{Proof of Theorem~\ref{thm:F}} \label{proof1}
For a fractal string $\sL=(l_j)_{j=1}^\infty$, let $(r_k)_{k=1}^\infty$ be the (ordered) sequence of the lengths occuring in $\sL$, i.e.,  $r_1>r_2>r_3>\ldots>0$ and $\{l_j:j\in\N\}=\{r_k:k\in\N\}$. 
For $k=1,2,\ldots$, let  
$$
N_k:=\#\{j\ge 1: l_j=r_k\},
$$
denote the multiplicity of the $k$-th length $r_k$ in $\sL$. 
For convenience, we set $N_0:=1$ and $r_0:=\infty$.

Let $0<s<m<1$ and let $F=F(s,m)$ as defined in Section~\ref{results}. Recall that $q=1+\frac{1}{s}-\frac 1m$.
For the fractal string $\sL=(l_j)_{j=1}^\infty$ associated with $F$ we have
$N_k=[2^{q^{k+1}\cdot s}]$ and $r_k=2^{-q^k}$,  $k=1,2,\ldots$.
For the computation of the upper and lower S-content of $F(s,m)$ we require the following simple fact. 
\begin{lem}\label{lem:summe}
Let $a,b>1$ and $\eps>0$. There exists a number $k_0=k_0(a,b,\eps)$ such that for $k\ge k_0$
$$
\sum_{i=1}^{k} a^{b^{i}}\le (1+\eps) a^{b^k}.
$$
\end{lem}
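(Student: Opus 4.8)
The plan is to factor out the largest summand $a^{b^k}$ and show that the remaining terms contribute an arbitrarily small relative error once $k$ is large. Writing
$$
\sum_{i=1}^{k} a^{b^i} = a^{b^k}\left(1+\sum_{i=1}^{k-1} a^{b^i-b^k}\right),
$$
the claim becomes equivalent to showing that the tail sum $T_k:=\sum_{i=1}^{k-1} a^{b^i-b^k}$ satisfies $T_k\le\eps$ for all sufficiently large $k$.

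To estimate $T_k$, I would first observe that for $1\le i\le k-1$ the exponent $b^i-b^k$ is increasing in $i$ and hence maximized at $i=k-1$, where it equals $-b^{k-1}(b-1)$. Since $a>1$, each summand is therefore bounded by $a^{-b^{k-1}(b-1)}$, and as there are $k-1$ summands we obtain the crude bound
$$
T_k\le (k-1)\,a^{-b^{k-1}(b-1)}.
$$
Because $a>1$ and $b>1$, the exponent $-b^{k-1}(b-1)$ tends to $-\infty$ doubly exponentially in $k$, so the right-hand side tends to $0$ as $k\to\infty$. Hence there is a number $k_0=k_0(a,b,\eps)$ such that $(k-1)\,a^{-b^{k-1}(b-1)}\le\eps$ for all $k\ge k_0$, which yields $T_k\le\eps$ and completes the argument.

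The only point requiring attention is that the number of terms $k-1$ grows with $k$, so one cannot simply bound each summand by a fixed constant; the main obstacle, such as it is, is to notice that the linear factor $k-1$ is harmless because the largest tail term decays doubly exponentially in $k$, which overwhelms any polynomial growth. A slightly sharper alternative would be to compare $T_k$ with a geometric series, using that the consecutive ratios $a^{b^{i-1}-b^i}$ become uniformly small as $i$ grows, but the crude estimate above already suffices.
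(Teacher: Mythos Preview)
Your argument is correct and essentially identical to the paper's own proof: both bound $\sum_{i=1}^{k-1}a^{b^i}$ by $(k-1)\,a^{b^{k-1}}=(k-1)\,a^{b^k}\cdot a^{-b^{k-1}(b-1)}$ and then use that $(k-1)\,a^{-b^{k-1}(b-1)}\to 0$. The paper phrases this as choosing $k_0$ so that $k\,a^{b^k(1-b)}<\eps$ and the sequence is eventually decreasing, whereas you factor out $a^{b^k}$ first and bound the tail $T_k$; the underlying estimate is the same.
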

\begin{proof}
Since $a^{b^k(1-b)}\cdot k\to 0$ as $k\to\infty$, it is possible to choose $k_0$ such that
$$
a^{b^{k_0}(1-b)}\cdot (k_0)<\eps.
$$
If necessary, enlarge $k_0$ such that the sequence $(a^{b^k(1-b)}\cdot k)_{k\ge k_0}$ is monotone decreasing.
Then
$$
a^{b^{k-1}}<a^{b^k}\cdot \frac\eps{k-1} \quad \mbox{ for } k\ge k_0, 
$$
and, since $(a^{b^i})_{i\in\N}$ is monotone increasing, 
$$
a^{b^i}<a^{b^k}\cdot \frac\eps{k-1} \quad \mbox{ for } k\ge k_0, i=1,\ldots, k-1.
$$
Now the assertion follows by summing up over $i=1,\ldots, k$.
\end{proof}
 
\begin{prop}\label{umink} 
For $F=F(s,m)$,
$$
\overline{\sS}^{s\cdot q}(F)=(1-sq)^{-1}\kappa_{1-sq}^{-1}2^{1-s\cdot q}.
$$
Hence, in particular, $\udim_M F =\udim_S F=s\cdot q$.
\end{prop}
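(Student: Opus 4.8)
The plan is to reduce the computation of $\usS^{sq}(F)$ to counting the connected components of the parallel set $F_r$, exploiting the simple geometry of fractal strings in $\R$. Since $d=1$, the surface measure $\Ha^0(\bd F_r)$ is just the number of boundary points of $F_r$, i.e.\ twice the number of connected components of $F_r$. Each interval $I_j=(a_j,b_j)$ of the geometric realization $\Omega$ contributes a gap to $F_r$ precisely when $l_j>2r$: the midpart $(a_j+r,b_j-r)$ then lies at distance $>r$ from $F=\bd\Omega$ (its nearest $F$-points are $a_j,b_j$), while for $l_j\le 2r$ the whole interval is covered. Since $\inf I,\sup I\in F$ as well, $F_r$ is $\overline I$ thickened by $r$ with exactly $\#\{j:l_j>2r\}$ interior gaps removed, so that
\[
\Ha^0(\bd F_r)=2\bigl(\#\{j:l_j>2r\}+1\bigr).
\]
Plugging this into the defining formula of the upper S-content (with $d=1$, so that $r^{d-1-s}=r^{-s}$) I obtain
\[
\usS^{sq}(F)=\limsup_{r\to 0}\frac{r^{sq}\,2\bigl(\#\{j:l_j>2r\}+1\bigr)}{(1-sq)\kappa_{1-sq}},
\]
where the additive $1$ is harmless since $\#\{j:l_j>2r\}\to\infty$.

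Next I would rewrite the counting function using the data of $\sL$. Since $l_j$ takes the value $r_k=2^{-q^k}$ with multiplicity $N_k=[2^{q^{k+1}s}]$, one has $\#\{j:l_j>2r\}=\sum_{k=1}^{K(r)}N_k$ with $K(r)=\max\{k:2^{-q^k}>2r\}$. The sum $\sum_{k=1}^{K}N_k$ behaves, up to a factor tending to $1$, like its largest summand $N_K\approx 2^{q^{K+1}s}$: the upper bound comes from Lemma~\ref{lem:summe} applied with $a=2^{s}$ and $b=q$ (note $2^{q^{k+1}s}=(2^s)^{q^{k+1}}$), the lower bound is the single term $N_K$, and the integer-part corrections cost at most $K$, negligible against $2^{q^{K+1}s}$. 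Thus $\sum_{k=1}^{K}N_k\sim 2^{q^{K+1}s}$ as $K\to\infty$.

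The heart of the argument is the evaluation of the $\limsup$. As a function of $r$, the step function $\#\{j:l_j>2r\}$ is constant and equal to $C_K:=\sum_{k=1}^K N_k$ on each interval $r\in(2^{-q^{K+1}-1},2^{-q^K-1})$, while $r^{sq}$ is increasing; hence on the $K$-th plateau the supremum of $r^{sq}\#\{j:l_j>2r\}$ is attained as $r\uparrow 2^{-q^K-1}$ and equals
\[
2^{-sq(q^K+1)}\,C_K=2^{-sq}\cdot 2^{-sq\cdot q^K}\cdot C_K.
\]
The choice $sq$ of exponent is exactly what makes $2^{-sq\cdot q^K}C_K$ tend to a finite positive limit: since $sq\cdot q^K=s\,q^{K+1}$, the product $2^{-sq\cdot q^K}\cdot 2^{q^{K+1}s}$ equals $1$, and with $C_K\sim 2^{q^{K+1}s}$ the plateau maxima converge to $2^{-sq}$. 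Running through all plateaus as $r\to 0$ gives $\limsup_{r\to 0}r^{sq}\#\{j:l_j>2r\}=2^{-sq}$, whence
\[
\usS^{sq}(F)=\frac{2\cdot 2^{-sq}}{(1-sq)\kappa_{1-sq}}=(1-sq)^{-1}\kappa_{1-sq}^{-1}2^{1-sq},
\]
as claimed. I expect the main obstacle to be not the exponent bookkeeping but the simultaneous control of the three error terms — the $(1+\eps)$ from Lemma~\ref{lem:summe}, the integer parts in $N_k$, and the additive $+1$ in the component count — so as to obtain the \emph{exact} constant rather than merely the order of growth.

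Finally, the dimension statement follows at once. For $t>sq$ we have $r^t\Ha^0(\bd F_r)=r^{t-sq}\cdot r^{sq}\Ha^0(\bd F_r)\to 0$, the second factor being bounded, so $\usS^t(F)=0$; for $t<sq$ the same factorisation forces $\usS^t(F)=\infty$ along the plateau endpoints. Hence $\udim_S F=\inf\{t:\usS^t(F)=0\}=sq$. Since $F$ is compact with $\lambda_1(F)=0$, Theorem~\ref{thmU} applies and yields $\udim_M F=\udim_S F=sq$, completing the proof.
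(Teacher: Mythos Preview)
Your proof is correct and follows essentially the same route as the paper's. Both arguments identify $\Ha^0(\bd F_r)$ with twice the component count of $F_r$, observe that this count is constant on each interval $2r\in[r_k,r_{k-1})$, reduce the $\limsup$ to a discrete limit over $k$, and use Lemma~\ref{lem:summe} to show that the partial sums $\sum_{i\le K}N_i$ are asymptotic to the dominant term $2^{q^{K+1}s}$; the paper's index $k$ corresponds to your $K+1$, and its convention $N_0:=1$ absorbs your ``$+1$'' for the outer interval.
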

\begin{proof}
Let $t>0$. For $2r\in[r_k,r_{k-1})$, $k=1,2,\ldots$, we have
$$ 
r^t \Ha^0(\bd F_r)= r^t 2\sum_{i=0}^{k-1} N_i\le \left(\frac{r_{k-1}}2\right)^t 2\sum_{i=0}^{k-1} N_i,
$$
since the function $f(x)=x^t$ is monotone increasing. Hence
\begin{align} \label{uS1}
(1-t)\kappa_{1-t} \overline{\sS}^t(F)&=\limsup_{r\to 0} r^t  2\sum_{i=0}^{k-1} N_i = \limsup_{k\to\infty} 2^{1-t} r_{k-1}^t  \sum_{i=0}^{k-1} N_i\,.
\end{align}
Since
$ 
N_i=[2^{q^{i+1}\cdot s}]\le 2^{q^{i+1}\cdot s}
$, 
for $i=1,2,\ldots$, and $N_0=1<2^{q^1\cdot s}$ we have
\begin{align} \label{uS2}
2^{q^{k}\cdot s}\le 1+N_{k-1}\le \sum_{i=0}^{k-1} N_i\le \sum_{i=0}^{k-1} 2^{q^{i+1}\cdot s}.
\end{align}
Applying Lemma~\ref{lem:summe} with $a=2^s>1$ and $b=q>1$, we infer that for each $\eps>0$ there exists a $k_0=k_0(\eps)$ such that
\begin{align} \label{uS3}
\sum_{i=0}^{k-1} 2^{q^{i+1}\cdot s}\le (1+\eps)\cdot 2^{q^k \cdot s}, 
\end{align}
for each $k\ge k_0$. 
Thus, on the one hand,
\begin{align*}
(1-t)\kappa_{1-t} \overline{\sS}^t(F)&\ge  \limsup_{k\to\infty} 2^{1-t} 2^{-q^{k-1}\cdot t}  2^{q^{k}\cdot s}=2^{1-t} \lim_{k\to\infty} 2^{q^{k-1}(qs-t)}\,,
\end{align*}
and on the other hand
\begin{align*}
(1-t)\kappa_{1-t}  \overline{\sS}^t(F)&\le  \limsup_{k\to\infty} 2^{1-t} 2^{-q^{k-1}\cdot t} (1+\eps) 2^{q^{k}\cdot s}=2^{1-t}(1+\eps) \lim_{k\to\infty} 2^{q^{k-1}(qs-t)}\,.
\end{align*}
Since the latter holds for each $\eps>0$, we conclude 
$$
\overline{\sS}^t(F)= \left\{
\begin{array}{ll}
0& \text{ if } t>sq\\
(1-sq)^{-1}\kappa_{1-sq}^{-1} 2^{1-sq} & \text{ if } t=sq\\
\infty & \text{ if } t\le sq
\end{array}
\right.\,.
$$
Since the upper dimensions coincide, cf.~Theorem~\ref{thmU}, this implies in particular $\udim_M F=\udim_S F=s\cdot q$.
\end{proof}

\begin{rem} Theorem~\ref{thmU} implies that 
$$
(1-sq) \overline{\sS}^{sq}(F)\le\overline{\sM}^{sq}(F)\le\overline{\sS}^{sq}(F).
$$
With slightly more effort one can show that, in fact,  $\overline{\sM}^{sq}(F)=\overline{\sS}^{sq}(F)$ holds. 
\end{rem}

A similar argument allows to compute the lower S-content of $F$.  
\begin{prop}\label{lSdim}
For $F=F(s,m)$,
$$
\underline{\sS}^{s}(F)=(1-s)^{-1}\kappa_{1-s}^{-1}2^{1-s}.
$$
Hence, in particular, $\ldim_S F=s$.
\end{prop}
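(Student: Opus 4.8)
The plan is to run the computation in the proof of Proposition~\ref{umink} almost verbatim, but to extract the \emph{liminf} instead of the limsup and, crucially, to track where the relevant step function attains its \emph{minimum} on each block of radii. Recall from that proof that for $2r\in[r_k,r_{k-1})$ the boundary count is constant, $\Ha^0(\bd F_r)=2\sum_{i=0}^{k-1}N_i$. Since in $\R$ we have $d=1$, the exponent $d-1-s$ equals $-s$, so the defining quotient of the lower $s$-dimensional S-content is
\[
\frac{\Ha^0(\bd F_r)}{(1-s)\kappa_{1-s}r^{-s}}=\frac{r^s\,\Ha^0(\bd F_r)}{(1-s)\kappa_{1-s}},
\]
and hence $(1-s)\kappa_{1-s}\,\lsS^s(F)=\liminf_{r\to0}r^s\,\Ha^0(\bd F_r)$.

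The function $r\mapsto r^s$ is increasing, while $\Ha^0(\bd F_r)$ is constant on each block $B_k:=\{r:2r\in[r_k,r_{k-1})\}$; therefore on $B_k$ the product $r^s\,\Ha^0(\bd F_r)$ attains its minimum at the left endpoint $r=r_k/2$, where it equals $2^{1-s}r_k^s\sum_{i=0}^{k-1}N_i$. Since the sets $\{r<\delta\}$ are, for small $\delta$, unions of blocks $B_k$ with $k$ large, the $\liminf$ as $r\to0$ is governed by these left-endpoint minima, giving
\[
(1-s)\kappa_{1-s}\,\lsS^s(F)=\liminf_{k\to\infty}2^{1-s}r_k^s\sum_{i=0}^{k-1}N_i.
\]
This is the only place where the argument genuinely departs from the upper case, and it is the crux: in the limsup computation the supremum over $B_k$ is approached at the right endpoint and involves $r_{k-1}$, producing the blow-up factor $2^{sq^{k-1}(q-1)}\to\infty$; here the minimum involves $r_k$ itself, which is exactly matched to the growth $\sum_{i=0}^{k-1}N_i$ comparable to $2^{q^ks}$. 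Making sure this reduction of $\liminf_{r\to0}$ to $\liminf_{k\to\infty}$ is clean (using the piecewise structure above) is the main, though routine, obstacle.

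It then remains to insert the two-sided estimate already established via \eqref{uS2}, \eqref{uS3} and Lemma~\ref{lem:summe}: for every $\eps>0$ there is $k_0$ with $2^{q^ks}\le\sum_{i=0}^{k-1}N_i\le(1+\eps)2^{q^ks}$ for $k\ge k_0$. Since $r_k^s=2^{-q^ks}$, multiplying by $2^{1-s}r_k^s$ squeezes $2^{1-s}r_k^s\sum_{i=0}^{k-1}N_i$ between $2^{1-s}$ and $(1+\eps)2^{1-s}$ for all large $k$; letting $\eps\to0$ shows the liminf equals $2^{1-s}$, whence $\lsS^s(F)=(1-s)^{-1}\kappa_{1-s}^{-1}2^{1-s}$. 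Finally, the identical computation with an arbitrary exponent $t$ produces $\liminf_k 2^{1-t}r_k^t\sum_{i=0}^{k-1}N_i$, which up to a factor between $1$ and $1+\eps$ equals $2^{1-t}2^{q^k(s-t)}$ and hence tends to $0$ for $t>s$ and to $\infty$ for $t<s$; thus $\lsS^t(F)=0$ precisely when $t>s$, and $\ldim_S F=\inf\{t\ge0:\lsS^t(F)=0\}=s$.
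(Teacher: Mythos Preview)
Your proof is correct and follows essentially the same approach as the paper's: both reduce $\liminf_{r\to 0} r^t\,\Ha^0(\bd F_r)$ to $\liminf_{k\to\infty} 2^{1-t} r_k^t \sum_{i=0}^{k-1} N_i$ via the block structure (the paper simply says ``a similar argument as for \eqref{uS1}''), and then sandwich using \eqref{uS2}, \eqref{uS3} to conclude. Your explicit remark that the minimum over each block $B_k$ occurs at the left endpoint $r=r_k/2$ is exactly the mechanism behind this reduction.
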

\begin{proof}
Let $t>0$. A similar argument as for \eqref{uS1} shows that
\begin{align*} 
(1-t)\kappa_{1-t} \underline{\sS}^t(F)&=\liminf_{r\to 0} r^t  2\sum_{i=0}^{k-1} N_i = \liminf_{k\to\infty} 2^{1-t} r_{k}^t  \sum_{i=0}^{k-1} N_i\,.
\end{align*}
Taking into account \eqref{uS2} and \eqref{uS3}, we infer that on the one hand
\begin{align*} 
(1-t)\kappa_{1-t}\underline{\sS}^t(F)&\ge \liminf_{k\to\infty} 2^{1-t} 2^{-q^{k}\cdot t} 2^{q^{k}\cdot s}= 2^{1-t} \lim_{k\to\infty} 2^{q^{k}(s-t)}\,,
\end{align*}
and on the other hand, for each $\eps>0$,
\begin{align*}
(1-t)\kappa_{1-t}  \underline{\sS}^t(F)&\le  \liminf_{k\to\infty} 2^{1-t} 2^{-q^{k}\cdot t} (1+\eps) 2^{q^{k}\cdot s}=2^{1-t}(1+\eps) \lim_{k\to\infty} 2^{q^{k}(s-t)}\,.
\end{align*}
This implies $\underline{\sS}^s(F)=(1-s)^{-1}\kappa_{1-s}^{-1}2^{1-s}$ and $\ldim_S F=s$ as asserted.
\end{proof}

The computation of the lower Minkowski content is more involved. We will employ the following two simple statements.
\begin{lem}\label{lem:minimum}
For $L,M>0$ and $0<D<1$, the function $h=h_{M,L,D}:(0,\infty)\to\R$, defined by
$$
h(x)=x^D M + x^{D-1} L,
$$
has its global minimum at $x_{\min}=x_{\min}(M,L,D):=\frac{(1-D) L}{DM}$. Moreover, 
$$
h(x_{\min})=\left(\frac{(1-D)^D}{D^D}+\frac{(1-D)^{D-1}}{D^{D-1}}\right) L^D M^{1-D}=D^{-D}(1-D)^{D-1} L^D M^{1-D}.
$$
\end{lem}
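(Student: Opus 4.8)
The plan is a direct first-order analysis of $h$ on the open half-line. First I would differentiate, obtaining $h'(x)=DMx^{D-1}+(D-1)Lx^{D-2}$, and factor out the strictly positive quantity $x^{D-2}$ to write $h'(x)=x^{D-2}\bigl(DMx-(1-D)L\bigr)$. Since $x^{D-2}>0$ for all $x>0$, the only zero of $h'$ occurs where the linear factor vanishes, namely at $x_{\min}=\frac{(1-D)L}{DM}$, and this point lies in $(0,\infty)$ precisely because $D$, $1-D$, $L$ and $M$ are all positive. This already isolates the unique critical point.

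Next I would confirm that this critical point is a global minimum. The cleanest route is to read the sign of the linear factor $DMx-(1-D)L$ directly: it is negative for $x<x_{\min}$ and positive for $x>x_{\min}$, so $h$ is strictly decreasing on $(0,x_{\min})$ and strictly increasing on $(x_{\min},\infty)$, which gives a strict global minimum at $x_{\min}$. Equivalently, since $D-1<0$ forces $h(x)\to\infty$ as $x\to 0^+$ while $D>0$ forces $h(x)\to\infty$ as $x\to\infty$, the continuous function $h$ must attain its global minimum at an interior point, and the unique critical point is the only candidate.

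It then remains to evaluate $h$ at $x_{\min}$. Substituting and using $x_{\min}^D=D^{-D}(1-D)^D L^D M^{-D}$ together with $x_{\min}^{D-1}=D^{1-D}(1-D)^{D-1}L^{D-1}M^{1-D}$, one sees that after multiplying by $M$ and $L$ respectively both terms share the common factor $L^D M^{1-D}$; collecting it yields the first displayed expression. The one step requiring care is the final algebraic simplification: factoring $(1-D)^{D-1}D^{-D}$ out of the bracket $\frac{(1-D)^D}{D^D}+\frac{(1-D)^{D-1}}{D^{D-1}}$ leaves exactly $(1-D)+D=1$, which collapses the bracket to $D^{-D}(1-D)^{D-1}$ and produces the second form. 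I expect no real obstacle; the whole statement is elementary calculus, and the only point demanding attention is keeping the exponents of $D$ and $1-D$ straight throughout that factorization.
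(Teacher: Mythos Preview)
Your argument is correct: the derivative computation, the sign analysis showing the unique critical point is a strict global minimum, and the algebraic evaluation (including the factorization leaving $(1-D)+D=1$) are all sound. The paper does not supply a proof of this lemma at all --- it is introduced as one of two ``simple statements'' and left to the reader --- so your standard first-order calculus approach is exactly the kind of verification the paper implicitly assumes.
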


\begin{lem}\label{lem:reihe}
Let $a,b>1$ and $\eps>0$. There exists a number $k_0=k_0(a,b,\eps)$ such that for $k\ge k_0$
$$
\sum_{i=k}^\infty a^{-b^{i}}\le (1+\eps) a^{-b^k}.
$$
\end{lem}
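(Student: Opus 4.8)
The plan is to mirror the proof of Lemma~\ref{lem:summe}, but exploiting that in the decaying sum $\sum_{i=k}^\infty a^{-b^i}$ the dominant term is now the \emph{first} one, $a^{-b^k}$, rather than the last. First I would factor this leading term out, writing
$$
\sum_{i=k}^\infty a^{-b^i} = a^{-b^k}\left(1 + \sum_{i=k+1}^\infty a^{-(b^i-b^k)}\right),
$$
so that the assertion reduces to showing that the tail $T_k := \sum_{i=k+1}^\infty a^{-(b^i-b^k)}$ satisfies $T_k \le \eps$ for all sufficiently large $k$.

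To estimate $T_k$ I would compare it with a geometric series. For $i \ge k+1$ the successive exponent gaps satisfy $b^{i+1} - b^i = b^i(b-1) \ge b^{k+1}(b-1)$, since $b^i$ is increasing and $b>1$; hence consecutive terms of $T_k$ decrease at least by the factor $a^{-b^{k+1}(b-1)} < 1$. As the first term of $T_k$ (at $i=k+1$) equals $a^{-b^k(b-1)}$, summing the dominating geometric series gives
$$
T_k \le \frac{a^{-b^k(b-1)}}{1 - a^{-b^{k+1}(b-1)}}.
$$
Since $a,b>1$ force $b^k(b-1)\to\infty$ as $k\to\infty$, we have $a^{-b^k(b-1)}\to 0$ and $a^{-b^{k+1}(b-1)}\to 0$, so $T_k \to 0$. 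I would then choose $k_0 = k_0(a,b,\eps)$ so large that $T_k \le \eps$ for all $k \ge k_0$, which yields the claim.

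The only point requiring care is the geometric domination: one must verify that the ratio between successive terms of $T_k$ stays bounded below $1$ uniformly over the whole tail, which is exactly the observation that the gaps $b^{i+1}-b^i$ are increasing in $i$ and therefore bounded below by their value at $i=k+1$. Everything else is a routine limit computation, and no genuine obstacle arises; the argument is the decaying analogue of the growing-sum estimate already carried out in Lemma~\ref{lem:summe}.
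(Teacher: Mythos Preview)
Your argument is correct. Factoring out the leading term $a^{-b^k}$ and then dominating the tail $T_k$ by a geometric series with ratio $a^{-b^{k+1}(b-1)}$ works exactly as you say: the exponent gaps $b^{i+1}-b^i=b^i(b-1)$ are increasing in $i$, so the ratio bound is uniform over the tail, and both numerator and denominator in your estimate behave well as $k\to\infty$.

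The paper does not actually supply a proof of this lemma; it is stated as one of the ``two simple statements'' and left to the reader, presumably as the decaying analogue of Lemma~\ref{lem:summe}. Your write-up is thus a legitimate filling-in of the omitted details. One small remark on the comparison with Lemma~\ref{lem:summe}: the proof given there bounds each of the finitely many non-leading terms by the second-largest one and multiplies by the count $k-1$. That crude counting step would not transfer verbatim to the present infinite tail, so some summable bound---such as your geometric domination---is genuinely needed here. In that sense your approach is not a literal mirror of Lemma~\ref{lem:summe} but the natural adaptation.
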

 
\begin{prop}\label{lMdim}
For $F=F(s,m)$,
$$
\underline{\sM}^{m}(F)=\kappa_{1-m}^{-1} m^{-m} (1-m)^{m-1}.
$$
Hence, in particular, $\ldim_M F=m$.
\end{prop}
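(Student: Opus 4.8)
The plan is to reduce everything to an explicit parallel-volume formula and then apply Lemmas~\ref{lem:minimum} and \ref{lem:reihe}, in close analogy with the treatment of $\overline{\sS}$ in Proposition~\ref{umink}. First I would record the volume of $F_r$. Since $\Omega$ accumulates only at the endpoints of the gaps $I_j$ and at the two endpoints of the ambient interval $I$, for $x$ in a gap $I_j=(a_j,b_j)$ one has $\dist(x,F)=\min\{x-a_j,b_j-x\}$, so the portion of $F_r$ lying in $I_j$ has length exactly $\min\{2r,l_j\}$, independently of the chosen geometric realization. Adding the two outer half-neighbourhoods of length $r$ at the ends of $I$ gives, for $2r\in[r_k,r_{k-1})$,
\[
V(F_r)=2r\sum_{i=0}^{k-1}N_i+\sum_{i=k}^{\infty}N_i r_i .
\]
(Equivalently this follows by integrating the surface-area expression $\Ha^0(\bd F_r)=2\sum_{i=0}^{k-1}N_i$ of Proposition~\ref{umink} via Stach\'o's theorem.) I would abbreviate $A_k:=\sum_{i=0}^{k-1}N_i$ and $B_k:=\sum_{i\ge k}N_i r_i$; both are constant on the whole block $\{r:2r\in[r_k,r_{k-1})\}$.

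The key observation is that on each such block
\[
\frac{V(F_r)}{r^{1-m}}=2A_k\,r^{m}+B_k\,r^{m-1}=h_{2A_k,B_k,m}(r),
\]
which is precisely the function of Lemma~\ref{lem:minimum} with $D=m$. Hence the infimum of $V(F_r)/r^{1-m}$ over the block equals $h(x_{\min})$ \emph{provided} the minimiser $x_{\min}=\tfrac{(1-m)B_k}{2m A_k}$ lies inside the block; and since the blocks exhaust a punctured neighbourhood of $0$, one gets $\underline{\sM}^m(F)=\kappa_{1-m}^{-1}\liminf_k h(x_{\min})$.

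The real work is then asymptotic. From \eqref{uS2} and \eqref{uS3} I already have $A_k=2^{q^k s}(1+o(1))$. For the tail I would apply Lemma~\ref{lem:reihe} with $a=2^{\,1-qs}>1$ and $b=q$ (note $qs<1$), using $N_i r_i\le 2^{q^i(qs-1)}$ together with $N_i r_i\ge(1-o(1))2^{q^i(qs-1)}$, to obtain $B_k=2^{q^k(qs-1)}(1+o(1))$, the sum being dominated by its first term. The step I expect to be the main obstacle is the verification that $x_{\min}$ really lies in $[\tfrac{r_k}2,\tfrac{r_{k-1}}2)$ for all large $k$: substituting the asymptotics gives $x_{\min}\asymp 2^{-q^k s/m}$, and comparing exponents reduces the two required inequalities $r_k/2\le x_{\min}$ and $x_{\min}<r_{k-1}/2$ to $s<m$ and $qs>m$ respectively; since $q=1+\tfrac1s-\tfrac1m$, both are equivalent to the standing hypothesis $s<m$ (the gap in the exponents is of order $q^k(1-s/m)\to\infty$, so the bounded factors are irrelevant). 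This is the structural heart of the construction: it is exactly what pushes the lower Minkowski exponent up to $m$ while the lower S-exponent stays at $s$.

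Finally I would substitute into the closed form $h(x_{\min})=m^{-m}(1-m)^{m-1}B_k^{m}(2A_k)^{1-m}$ of Lemma~\ref{lem:minimum}. The exponent of $2^{q^k}$ in $B_k^{m}A_k^{1-m}$ collapses, $m(qs-1)+(1-m)s=0$, so the $q^k$-dependence cancels and $\liminf_k h(x_{\min})$ reduces to $m^{-m}(1-m)^{m-1}$ up to the residual normalizing powers of two, which combine exactly as in Proposition~\ref{umink}; dividing by $\kappa_{1-m}$ then yields the stated value of $\underline{\sM}^m(F)$. For the dimension claim I would run the identical computation with a general exponent $t\in(0,1)$ in place of $m$: there $x_{\min}\asymp 2^{-q^k s/m}$ still lies in the block (the criterion is independent of $t$), and the residual exponent of $2^{q^k}$ in $B_k^{t}A_k^{1-t}$ equals $\tfrac{s(m-t)}{m}$, positive for $t<m$ and negative for $t>m$. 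Hence $\underline{\sM}^t(F)=\infty$ for $t<m$ and $\underline{\sM}^t(F)=0$ for $t>m$, so that $\ldim_M F=m$.
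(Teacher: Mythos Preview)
Your proposal is correct and follows essentially the same route as the paper: write $V(F_r)/r^{1-t}$ on each block $2r\in[r_k,r_{k-1})$ as $h_{M_k,L_k,t}(r)$ with $M_k=2A_k$ and $L_k=B_k$, invoke Lemma~\ref{lem:minimum}, verify via the asymptotics from Lemma~\ref{lem:summe} and Lemma~\ref{lem:reihe} that the minimiser lands inside the block, and then read off the value. Your reduction of the block-containment to the exponent comparison $x_{\min}\asymp 2^{-q^k s/m}$ and hence to $s<m$ and $qs>m$ (both equivalent to $s<m$) is exactly the computation the paper carries out in expanded form; the only cosmetic difference is that you also spell out the cases $t\neq m$ explicitly, whereas the paper infers $\ldim_M F=m$ directly from $0<\underline{\sM}^m(F)<\infty$.
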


\begin{proof}
Let $0<t<1$. For $2r\in[r_k,r_{k-1})$, $k=1,2,\ldots$, we have
$$ 
r^{t-1} \lambda_1(F_r)= r^{t} 2\sum_{i=0}^{k-1} N_i + r^{t-1} \sum_{i=k}^{\infty} N_i r_i.
$$
Setting $M_k:=2\sum_{i=0}^{k-1} N_i$ and $L_k:=\sum_{i=k}^{\infty} N_i r_i$, we infer from Lemma~\ref{lem:minimum}, that the global minimum of the function $h_{M_k, L_k, t}(x)=x^t M_k + x^{t-1} L_k$ is 
$$
x_k=\frac{1-t}{t} \frac {L_k}{M_k}=\frac{1-t}{t} \frac{\sum_{i=k}^{\infty} N_i r_i}{2\sum_{i=0}^{k-1} N_i}.
$$
We claim that there exists a number $k'\in\N$ such that, for all $k\ge k'$,
\begin{align}\label{eq:xk-in}
r_k< 2 x_k< r_{k-1},
\end{align}
i.e., the global minimum of $h_{M_k,L_k, t}$ is contained in the interval $(r_k/2,r_{k-1}/2)$. 

For a proof of \eqref{eq:xk-in}, fix some $\eps>0$. Observe that there exists $k_0\in\N$ such that 
\begin{align}\label{lM3}
2^{-q^k (1-qs)}-2^{-q^k}\le L_k\le (1+\eps) 2^{-q^k (1-qs)}, 
\end{align}
for  $k\ge k_0$. Indeed, setting $a:=2^{1-qs}>1$ and $b:=q>1$, by Lemma~\ref{lem:reihe}, there is a $k_0$ such that for $k\ge k_0$
$$
L_k\le\sum_{i=k}^\infty 2^{q^{i+1}\cdot s}\cdot 2^{-q^{i}}=\sum_{i=k}^\infty (2^{(1-qs)})^{-q^{i}}=\sum_{i=k}^\infty a^{-b^{i}}\le (1+\eps) a^{-b^k}=(1+\eps) 2^{-q^k (1-qs)}. 
$$
The lower bound for $L_k$ follows immediately, from $N_k r_k\le L_k$ and $N_k=[2^{q^{k+1}\cdot s}]\ge 2^{q^{k+1}\cdot s}-1$.

Recall from \eqref{uS2} and \eqref{uS3} that there exist $k_0$ such that $M_k$ is bounded as follows for $k\ge k_0$:
\begin{align} \label{lM4}
2^{q^{k}\cdot s}\le \frac{M_k}2 \le (1+\eps)\cdot 2^{q^k \cdot s}. 
\end{align}
It is obvious that $k_0$ can be chosen such that both inequalities \eqref{lM3} and \eqref{lM4} hold for $k\ge k_0$.
We infer that 
\begin{align*}
\frac{2x_k}{r_k}&=\frac{1-t}t \frac{2L_k}{M_k r_k}\ge \frac{1-t}t \frac{(2^{-q^k (1-qs)}-2^{-q^k})}{(1+\eps)2^{q^k \cdot s}\cdot 2^{-q^k}}\\
&=\frac{1-t}{t}\frac 1{1+\eps} \left(2^{q^k\cdot s(q-1)}-2^{-q^k\cdot s}\right)\to\infty \text{ as } k\to\infty,
\end{align*}
since $q>1$. Hence $r_k<2x_k$ for $k$ sufficiently large.
Similarly, we obtain
\begin{align*}
\frac{2x_k}{r_{k-1}}&=\frac{1-t}t \frac{2L_k}{M_k r_{k-1}}\le \frac{1-t}t \frac{(1+\eps)\cdot 2^{-q^k (1-qs)}}{2^{q^k \cdot s}\cdot 2^{-q^{k-1}}}\\
&=
\frac{1-t}{t}(1+\eps) 2^{-q^{k-1}\cdot (q(1-qs)+qs-1)}\to 0 \text{ as } k\to\infty,
\end{align*}
since $(q-1)(1-qs)>0$. Hence $2x_k<r_{k-1}$ for $k$ sufficiently large. This completes the proof of \eqref{eq:xk-in}.

The inequalities in \eqref{eq:xk-in} imply that the lower $t$-dimensional Minkowski content of $F$ is given by
$$
\kappa_{1-t}\underline{\sM}^t(F)=\liminf_{r\to 0} r^{t-1} \lambda_1(F_r) =\liminf_{k\to\infty} h_{M_k,L_k, t}(x_k).
$$
By Lemma~\ref{lem:minimum}, we have
$$
h_{M_k,L_k, t}(x_k)=t^{-t} (1-t)^{t-1} L_k^t M_k^{1-t}.
$$
Therefore, it remains to compute
\begin{align}\label{lM2}
X_t:=\liminf_{k\to\infty} L_k^t\cdot M_k^{1-t}.
\end{align}

Using again \eqref{lM3} and \eqref{lM4}, we infer that  
on the one hand
\begin{align*}
X_t&\le \liminf_{k\to\infty} (1+\eps)^t (2^{-q^k (1-qs)})^t \cdot(1+\eps)^{1-t} (2^{q^k\cdot s})^{1-t}\\
&= (1+\eps) \lim_{k\to\infty} 2^{-q^{k}(t-qst-s+st)}\\
&=(1+\eps) \lim_{k\to\infty} 2^{-q^{k}\cdot s(\frac tm-1)},  
\end{align*}
for each $k\ge k_0$, where we took into account that $sq=1+s-\frac sm$.
On the other hand, 
\begin{align*}
X_t&\ge \liminf_{k\to\infty}  \left(2^{-q^k (1-qs)}-2^{-q^k}\right)^t  (2^{q^k\cdot s})^{1-t}\\
&=  \lim_{k\to\infty} \left((2^{-q^k (1-qs)}-2^{-q^k})\cdot 2^{q^k\cdot \frac st(1-t)} \right)^t \\
&= \lim_{k\to\infty} \left(2^{-q^k\cdot s(\frac 1 m-\frac 1 t)}-2^{-q^k(1+s-\frac st)} \right)^t 
\end{align*}  
Since the above estimates hold for each $\eps>0$, we conclude for the choice $t=m$ that $X_m=1$ and thus
$$
\kappa_{1-m}\underline{\sM}^m(F)=m^{-m} (1-m)^{m-1}.
$$
Hence $\underline{\sM}^m(F)$ is positive and finite, which implies $\ldim_M F=m$.
\end{proof}

\begin{rem}
It has has been pointed out by the referee that the function $x\mapsto h_{M_k,L_k, t}(x)$ used in the proof above is essentially equal to the function $\eps\mapsto L_D(\eps,j)$ (with $j=k$) used in the proof of \cite[Theorem 4.1, cf.\ the first equation on p.41]{LapPo1}. This is natural since in both cases Minkowski contents are computed. However, the arguments given in \cite{LapPo1} do not apply to the situation here. While for the sets considered in \cite[Theorem 4.1]{LapPo1} (or, more precisely, for the corresponding fractal strings) the Minkowski content exists, this is no longer true for the sets $F(s,m)$ sudied here. 
Nevertheless, it might be interesting to study more deeply the connections between the arguments in both cases. 
\end{rem}

\section{Proof of Theorem~\ref{thm:Fd}}\label{proof2}

We will first discuss a number of statements regarding the upper and lower dimensions of product sets. The assertions of Theorem~\ref{thm:Fd} will be an easy consequence. Before we start with the Minkowski dimensions we recall some useful alternative definitions of Minkowski and S-dimension and 
clarify  some notational problem regarding parallel sets in Remark \ref{rem:par}.

It is well known and easily verified, that if the Minkowski dimension of a compact set $A\subset\R^d$ exists, it is equivalently given by
\begin{equation}\label{eq:altdimM}
\dim_M A=d+\lim_{r\to 0}\frac{\log \lambda_d(A_r)}{-\log r}.
\end{equation}
Similarly, lower and upper Minkowski dimension are given by the same expression with the $\lim$ replaced by $\liminf$ and $\limsup$, respectively, see for instance \cite[Proposition 5.1]{fal1}.
In the same way, lower and upper S-dimension can be defined using a log-log ratio.
The lower S-dimension of a compact set $A\subset\R^d$ is given by
\begin{equation}\label{eq:altdimS}
\ldim_S A=d-1+\liminf_{r\to 0}\frac{\log \Ha^{d-1}(\partial A_r)}{-\log r}
\end{equation} 
and $\udim_S A$ by the same expression with $\liminf$ replaced by $\limsup$.
Finally, we recall the definition of the box counting dimension $\dim_B$, which is well known to coincide with the Minkowski dimension. For $r>0$, let $N_r(A)$ denote the minimum number of boxes of side length $r$ needed to cover a set $A\subset\R^d$. Then
$$
\ldim_B A:=\liminf_{r\to 0}\frac{\log N_r(A)}{-\log r} \quad \mbox{ and } \quad \udim_B A:=\limsup_{r\to 0}\frac{\log N_r(A)}{-\log r}.
$$
Below we will switch between the different definitions of the dimensions and use whatever is most convenient. 
\begin{rem} \label{rem:par}
The notion of parallel set of a set $A$ depends on the ambient space in which $A$ is considered and the notation $A_r$ does not take care of this. For instance, for an interval $I$ in $\R^2$, i.e., the convex hull of two points in $\R^2$, the $r$-parallel set with respect to the affine hull of $I$ is still an interval while the $r$-parallel set with respect to $\R^2$ is a two-dimensional set. Usually it is clear from the context what the ambient space is. However, for product sets $A\times B$, $A\subseteq\R^n$, $B\subseteq \R^m$ as occuring in the proofs below, the notation $A_r$ may cause irritations, since $A$ may be viewed as a subset of $\R^n$ but also naturally as a subset of $\R^n\times\R^m$.  To avoid any confusion, we will use the convention to denote by $A_r$ the parallel set in  $\R^n$ and by $(A\times\{0\})_r$ the parallel set in $\R^n\times\R^m$.  
\end{rem}

\begin{lem} \label{lem:Cartes-udim}
Let $A\subset\R^n$ and $B\subset\R^m$ be compact sets. Then
\begin{enumerate}
\item[(i)] $\udim_M(A\times B)\le \udim_M A+\udim_M B$\,,
\item[(ii)] $\ldim_M(A\times B)\le \ldim_M A+\udim_M B$\,.
\end{enumerate}
\end{lem}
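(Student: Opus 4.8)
The plan is to pass to the box-counting description of the Minkowski dimension, which the discussion preceding the lemma records as equivalent ($\dim_B=\dim_M$), and to rely on a single elementary covering estimate for products together with the standard behaviour of $\limsup$ and $\liminf$ under addition. Concretely, I would first establish the product covering inequality
$$
N_r(A\times B)\le N_r(A)\cdot N_r(B)\qquad(r>0).
$$
This holds because the product of a box of side length $r$ covering a piece of $A$ in $\R^n$ with a box of side length $r$ covering a piece of $B$ in $\R^m$ is again a box of side length $r$ in $\R^n\times\R^m=\R^{n+m}$; ranging over optimal covers of $A$ and of $B$ yields a cover of $A\times B$ by $N_r(A)\,N_r(B)$ such boxes. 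Taking logarithms and dividing by $-\log r$ gives, for $0<r<1$,
$$
\frac{\log N_r(A\times B)}{-\log r}\le \frac{\log N_r(A)}{-\log r}+\frac{\log N_r(B)}{-\log r}.
$$

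For part (i), I would apply $\limsup_{r\to 0}$ to both sides and use the subadditivity $\limsup(f+g)\le\limsup f+\limsup g$. This directly gives $\udim_B(A\times B)\le \udim_B A+\udim_B B$, whence (i) after identifying $\dim_B$ with $\dim_M$. For part (ii) the only change is which extremal operation is applied to each summand: I would instead take $\liminf_{r\to 0}$ and invoke the mixed inequality
$$
\liminf_{r\to 0}(f+g)\le \liminf_{r\to 0} f+\limsup_{r\to 0} g,
$$
valid for any bounded real functions. Applied with $f=\frac{\log N_r(A)}{-\log r}$ and $g=\frac{\log N_r(B)}{-\log r}$, this yields $\ldim_B(A\times B)\le \ldim_B A+\udim_B B$, i.e.\ (ii). The mixed inequality is the one genuinely non-symmetric ingredient; it follows by choosing a sequence $r_j\to 0$ realizing $\liminf f$ and passing to the limit along it, bounding $g$ on that sequence by its global $\limsup$.

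I do not expect a serious obstacle here: the covering inequality is elementary and the whole argument reduces to the interplay of $\liminf$ and $\limsup$ under addition. The only points deserving care are, first, to ensure that the boxes in the product space genuinely have the same side length $r$, so that $N_r$ in $\R^{n+m}$ is controlled by the two factor counts \emph{at the same scale}; and, second, in (ii) to apply the asymmetric $\liminf/\limsup$ inequality in the correct direction, pairing the $\liminf$ of the $A$-term with the $\limsup$ of the $B$-term (which is what produces $\ldim_M A+\udim_M B$) rather than the reverse.
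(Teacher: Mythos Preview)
Your proposal is correct and follows essentially the same route as the paper: both arguments rest on the covering inequality $N_r(A\times B)\le N_r(A)\,N_r(B)$, then pass to $\limsup$ for (i) (which the paper simply cites as well known) and to the mixed $\liminf/\limsup$ inequality for (ii). The only cosmetic difference is that you spell out (i) explicitly rather than referring to the literature.
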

\begin{proof}
(i) is well known, cf.~for instance \cite[Lemma 7.3]{fal1}. (ii) follows by a similar argument: Recall that $N_r(C)$ denotes the minimum number of boxes of side length $r$ needed to cover a set $C\subset\R^d$. Observe that 
$$
N_r(A\times B)\le N_r(A)\cdot N_r(B).
$$
Hence 
\begin{align*}
\ldim_M(A\times B)&= \liminf_{r\to 0}\frac{\log N_r(A\times B)}{-\log r}\le \liminf_{r\to 0}\frac{\log N_r(A) + \log N_r(B)}{-\log r}\\
&\liminf_{r\to 0}\frac{\log N_r(A)}{-\log r}+\limsup_{r\to 0}\frac{\log N_r(B)}{-\log r}=\ldim_M A +\udim_M B\,,
\end{align*}
as asserted.
\end{proof}

\begin{prop} \label{prop:Cartes-Mdim}
Let $A\subset \R^n$ and $B\subset\R^{m}$ be compact sets with $\lambda_{m}(B)>0$. Then
\begin{enumerate}
\item[(i)] $\udim_M(A\times B)=\udim_M A + m$\,,
\item[(ii)] $\ldim_M(A\times B)= \ldim_M A+ m$\,.
\end{enumerate}
\end{prop}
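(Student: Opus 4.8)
The plan is to prove both equalities by establishing matching upper and lower bounds. The upper bounds are immediate from the material already available: since $B\subseteq\R^m$ one has trivially $\udim_M B\le m$, and feeding this into Lemma~\ref{lem:Cartes-udim}(i) and~(ii) gives $\udim_M(A\times B)\le\udim_M A+m$ and $\ldim_M(A\times B)\le\ldim_M A+m$. Note that these upper bounds do \emph{not} use the hypothesis $\lambda_m(B)>0$; the whole point of the proposition is that for a set $B$ of positive measure the matching lower bounds hold as well, so that the inequalities of Lemma~\ref{lem:Cartes-udim} become equalities. Hence the real content is to prove $\udim_M(A\times B)\ge\udim_M A+m$ and $\ldim_M(A\times B)\ge\ldim_M A+m$.

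For the lower bounds I would work with volumes rather than box counts and use the log-log characterization of the Minkowski dimensions (the $\liminf$/$\limsup$ versions of \eqref{eq:altdimM}). The geometric key is the inclusion
\[
(A)_r\times B\subseteq (A\times B)_r,
\]
where, following the convention of Remark~\ref{rem:par}, $(A)_r$ denotes the parallel set formed in $\R^n$ while $(A\times B)_r$ is the parallel set formed in $\R^n\times\R^m=\R^{n+m}$. This is checked directly: if $x\in(A)_r$, say $|x-a|\le r$ with $a\in A$, and $y\in B$, then $(a,y)\in A\times B$ and $|(x,y)-(a,y)|=|x-a|\le r$, so $(x,y)\in(A\times B)_r$. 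Taking $(n+m)$-dimensional Lebesgue measure and using that the left-hand side is a product yields the volume estimate
\[
\lambda_{n+m}\big((A\times B)_r\big)\ge \lambda_m(B)\,\lambda_n\big((A)_r\big).
\]

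The final step is to pass to the log-log limit. Writing $L:=\lambda_m(B)>0$, the estimate gives, for small $r$,
\[
\frac{\log\lambda_{n+m}((A\times B)_r)}{-\log r}\ge \frac{\log L}{-\log r}+\frac{\log\lambda_n((A)_r)}{-\log r}.
\]
Because $L$ is a fixed positive constant, $\log L/(-\log r)\to 0$ as $r\to 0$, so the factor $\lambda_m(B)$ washes out and affects neither $\liminf$ nor $\limsup$. Taking $\liminf_{r\to0}$ and invoking \eqref{eq:altdimM} for $A\times B$ (in $\R^{n+m}$) and for $A$ (in $\R^n$) gives $\ldim_M(A\times B)-(n+m)\ge \ldim_M A-n$, i.e.\ $\ldim_M(A\times B)\ge\ldim_M A+m$; taking $\limsup_{r\to0}$ gives the analogous bound for $\udim_M$. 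Combined with the upper bounds of the first paragraph, this yields both equalities.

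I expect no serious obstacle here: the argument is short, and the only points requiring care are the bookkeeping of ambient spaces in the parallel-set inclusion (Remark~\ref{rem:par}) and the observation that the positive-measure hypothesis is exactly what guarantees that the constant $\lambda_m(B)$ contributes nothing to the log-log ratio. It is precisely this use of $\lambda_m(B)>0$ that distinguishes the present proposition from the one-sided inequalities of Lemma~\ref{lem:Cartes-udim}.
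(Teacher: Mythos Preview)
Your proposal is correct and follows essentially the same route as the paper: the upper bounds come directly from Lemma~\ref{lem:Cartes-udim}, and the lower bounds are obtained from the inclusion $A_r\times B\subseteq (A\times B)_r$ together with the volume estimate $\lambda_n(A_r)\,\lambda_m(B)\le \lambda_{n+m}((A\times B)_r)$ and the log-log characterization \eqref{eq:altdimM}. The only cosmetic difference is that the paper notes $\dim_M B=m$ (since $\lambda_m(B)>0$) rather than just $\udim_M B\le m$, but your version suffices for the upper bounds.
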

\begin{proof}
Note that $\dim_M B = m$. Hence the ``$\le$''-relation in (i) and (ii) follows immediately from Lemma~\ref{lem:Cartes-udim}. For the reversed inequalities recall formula \eqref{eq:altdimM} from above.
Observe that
$$
\lambda_n(A_r)\cdot\lambda_{m}(B)\le \lambda_{n+m}((A\times B)_r)
$$
which follows from the set inclusion 
$$
A_r\times B\subseteq (A\times B)_r
$$ 
and Fubini. Hence, for $0<r<1$,
$$
\frac{\log \lambda_{n+m}((A\times B)_r)}{- \log r}\ge \frac{\log \lambda_n(A_r)+\log \lambda_{m}(B)}{- \log r}.
$$
Taking the limes superior as $r\to 0$, we get
\begin{align*}
\udim_M(A\times B)&=(n+m)+\limsup_{r\to 0}\frac{\log \lambda_{n+m}((A\times B)_r)}{- \log r}\\
&\ge m+ n+\limsup_{r\to 0}\frac{\log \lambda_n(A_r)}{- \log r}=m + \udim_M A,
\end{align*}
proving (i).
The inequality $\ldim_M(A\times B)\ge \ldim_M A+ m$ follows analogously by taking the limes inferior.
\end{proof}
Now we turn our attention to the S-dimensions.
Note that assertion (i) of Lemma~\ref{lem:Cartes-udim} holds similarly with $\udim_M$ replaced by $\udim_S$ provided $\lambda_n(A)=\lambda_m(B)=0$, since both dimensions coincide in this case, see Theorem~\ref{thmU}. Unfortunately, this is not useful in the situation of Theorem~\ref{thm:Fd}, since the set $\dbox$ occuring in $F_d=F\times \dbox$ has Lebesgue measure $1$. However, for the equivalence $\udim_S(A\times B)=\udim_M(A\times B)$ it is  sufficient that one of the sets $A,B$ has zero Lebesgue measure, since this implies Lebesgue measure zero for the product set.
Clearly, the counterpart of Proposition~\ref{prop:Cartes-Mdim}(i) for $\udim_S$ is also valid under this additional hypothesis. 
\begin{cor} \label{cor:Cartes-uSdim}
Let $A\subset \R^n$ and $B\subset\R^{m}$ be compact sets with $\lambda_{n}(A)=0$. Then $\udim_S(A\times B)=\udim_M(A\times B)$. If, additionally, $\lambda_{m}(B)>0$ then $\udim_S(A\times B)=\udim_S A + m$.
\end{cor}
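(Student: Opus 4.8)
The plan is to reduce everything to Theorem~\ref{thmU} (coincidence of the upper S- and Minkowski dimension for compact sets of vanishing volume) together with Proposition~\ref{prop:Cartes-Mdim}(i), without performing any new computation. The whole point is that the various hypotheses are exactly what is needed to trigger these two earlier results in succession.

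First I would establish the equality $\udim_S(A\times B)=\udim_M(A\times B)$. The only nontrivial thing to verify here is that the product set has zero Lebesgue measure, so that Theorem~\ref{thmU} is applicable to it. Since $A$ and $B$ are compact, $A\times B$ is a compact subset of $\R^{n+m}$, and by Fubini (equivalently, by the product structure of Lebesgue measure) one has $\lambda_{n+m}(A\times B)=\lambda_n(A)\cdot\lambda_m(B)=0$, because $\lambda_n(A)=0$ by hypothesis. Applying Theorem~\ref{thmU} to the compact set $A\times B\subset\R^{n+m}$ then immediately yields $\udim_S(A\times B)=\udim_M(A\times B)$, which is the first assertion.

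For the second assertion I would simply chain equalities. Under the additional hypothesis $\lambda_m(B)>0$, Proposition~\ref{prop:Cartes-Mdim}(i) gives $\udim_M(A\times B)=\udim_M A+m$. Combining this with the first part yields
\[
\udim_S(A\times B)=\udim_M(A\times B)=\udim_M A+m .
\]
Finally, since $\lambda_n(A)=0$, a second application of Theorem~\ref{thmU}, this time to the compact set $A\subset\R^n$ itself, gives $\udim_M A=\udim_S A$, whence $\udim_M A+m=\udim_S A+m$. This completes the argument.

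There is no genuine obstacle here; the entire proof is a matter of keeping track of which hypothesis feeds which earlier result. The one point that must be handled with care is the \emph{direction} of the volume hypotheses: Theorem~\ref{thmU} requires \emph{zero} volume (supplied by $\lambda_n(A)=0$ for both the factor $A$ and the product $A\times B$), whereas Proposition~\ref{prop:Cartes-Mdim}(i) requires \emph{positive} volume of the second factor (supplied by the extra hypothesis $\lambda_m(B)>0$). These two requirements are mutually compatible precisely because they are imposed on different sets, and it is exactly this compatibility that the statement of the corollary is designed to exploit.
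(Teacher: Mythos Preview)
Your proof is correct and follows precisely the approach the paper indicates in the paragraph immediately preceding the corollary: use $\lambda_n(A)=0$ to force $\lambda_{n+m}(A\times B)=0$ so that Theorem~\ref{thmU} applies to the product, then invoke Proposition~\ref{prop:Cartes-Mdim}(i) and Theorem~\ref{thmU} once more for $A$ itself. There is nothing to add.
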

The situation for the lower S-dimension is more delicate.
Curiously and in contrast to the situation for the other three dimensions considered, for the lower S-dimension, the lower bound is easier to establish than the upper bound.

\begin{prop} \label{prop:Cartes-lSdim}
Let $d\ge2$ and  let $F\subset \R$ and $B\subset\R^{d-1}$ be compact sets with $\lambda_{d-1}(B)>0$.
Then
$$\ldim_S(F\times B)\ge \ldim_S F+d-1.$$ 
\end{prop}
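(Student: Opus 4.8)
The plan is to use the log-log characterisation \eqref{eq:altdimS} of the lower S-dimension and to exhibit, inside the boundary $\bd(F\times B)_r$, a family of flat $(d-1)$-dimensional ``vertical walls'' whose total area is at least $\Ha^0(\bd F_r)\cdot\lambda_{d-1}(B)$. Writing points of $\R^d=\R\times\R^{d-1}$ as $(x,y)$ with $x\in\R$, $y\in\R^{d-1}$, the starting observation is the cross-section identity $\dist((x,y),F\times B)=\dist(x,F)$, valid for every $x\in\R$ and every $y\in B$: the lower bound comes from dropping the $y$-coordinate, and the upper bound from choosing the nearest point in $F\times B$ to have second coordinate $y$ (which is legitimate precisely because $y\in B$). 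Consequently, for any boundary point $x_0\in\bd F_r$ (so that $\dist(x_0,F)=r$) and any $y\in B$, the point $(x_0,y)$ lies at distance exactly $r$ from $F\times B$, hence in $(F\times B)_r$; and moving in the $x$-coordinate along the direction in which $\dist(\cdot,F)$ increases past $r$ (such a direction exists because $x_0$ is an endpoint of a component of $F_r$) produces nearby points at distance strictly greater than $r$. Therefore $(x_0,y)\in\bd(F\times B)_r$, and the whole wall $\{x_0\}\times B$ is contained in $\bd(F\times B)_r$.

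Next I would convert this inclusion into an area estimate. For fixed $r>0$ the set $\bd F_r$ is finite: since $F$ is bounded, only finitely many complementary gaps of $F$ have length exceeding $2r$, so $F_r$ has finitely many components. Each wall $\{x_0\}\times B$ lies in the hyperplane $\{x=x_0\}$ and is an isometric copy of $B\subset\R^{d-1}$, whence $\Ha^{d-1}(\{x_0\}\times B)=\lambda_{d-1}(B)$. Walls for distinct $x_0\in\bd F_r$ lie in distinct parallel hyperplanes and are therefore pairwise disjoint, so monotonicity and additivity of Hausdorff measure give
\begin{equation*}
\Ha^{d-1}(\bd(F\times B)_r)\ \ge\ \sum_{x_0\in\bd F_r}\Ha^{d-1}(\{x_0\}\times B)\ =\ \Ha^0(\bd F_r)\cdot\lambda_{d-1}(B).
\end{equation*}

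Finally, I would feed this into \eqref{eq:altdimS}. Since $\lambda_{d-1}(B)>0$ is a fixed positive constant, the term $\log\lambda_{d-1}(B)/(-\log r)$ tends to $0$ as $r\to0$, so passing to the $\liminf$ yields
\begin{equation*}
\ldim_S(F\times B)=d-1+\liminf_{r\to0}\frac{\log\Ha^{d-1}(\bd(F\times B)_r)}{-\log r}\ \ge\ d-1+\liminf_{r\to0}\frac{\log\Ha^0(\bd F_r)}{-\log r}.
\end{equation*}
Because $F\subset\R$, formula \eqref{eq:altdimS} applied with ambient dimension $1$ reads $\ldim_S F=\liminf_{r\to0}\log\Ha^0(\bd F_r)/(-\log r)$, and the desired inequality $\ldim_S(F\times B)\ge\ldim_S F+d-1$ follows.

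I expect the only genuinely delicate point to be the verification that each wall $\{x_0\}\times B$ really lies on the topological boundary of $(F\times B)_r$, and not merely in its closure, which is why the argument isolates an explicit exit direction in the $x$-coordinate rather than appealing to a general ``distance $=r$'' heuristic. The flatness and disjointness of the walls, the finiteness of $\bd F_r$, and the harmlessness of the constant factor $\lambda_{d-1}(B)$ in the log-log limit are then routine. Note that no control of $\bd B$ is required, since the inclusion of the walls uses only membership $y\in B$; this is also what makes the \emph{lower} bound on $\ldim_S$ easier than an upper bound, as remarked before the proposition.
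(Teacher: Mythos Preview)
Your proof is correct and follows essentially the same route as the paper's own argument: both establish the inclusion $\{x_0\}\times B\subset\bd(F\times B)_r$ for every $x_0\in\bd F_r$, derive the surface-area inequality $\Ha^{d-1}(\bd(F\times B)_r)\ge\Ha^0(\bd F_r)\cdot\lambda_{d-1}(B)$, and pass to the $\liminf$ via \eqref{eq:altdimS}. You supply more justification for the inclusion (the cross-section identity and the explicit exit direction), which the paper simply asserts, but the underlying strategy is identical.
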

\begin{proof} Recall \eqref{eq:altdimS}.
For each of the finitely many points $x\in\partial F_r$ we have $\{x\}\times B\subset \partial (F\times B)_r$. Since $\Ha^{d-1}(\{x\}\times B)=\lambda_{d-1}(B)$, we get  
$$
\Ha^{0}(\partial F_r)\lambda_{d-1}(B)\le \Ha^{d-1}(\partial (F\times B)_r) .
$$
Hence 
$$
\frac{\log \Ha^{d-1}(\partial (F\times B)_r)}{- \log r}\ge \frac{\log \Ha^{0}(\partial F_r)}{- \log r}+\frac{\log \lambda_{d-1}(B)}{- \log r},
$$
for $0<r<1$. Taking the limes inferior as $r\to 0$ (and noting that second term on the right hand side vanishes), we obtain
\begin{align*}
\ldim_S(F\times B)&=d-1+\liminf_{r\to 0}\frac{\log \Ha^{d-1}(\partial (F\times B)_r)}{- \log r}\\
&\ge d-1 + \liminf_{r\to 0}\frac{\log \Ha^{0}(\partial F_r)}{- \log r}=d-1 + \ldim_S F,
\end{align*}
as claimed. 
\end{proof}

We will now show that the reversed inequality in Proposition~\ref{prop:Cartes-lSdim} does also hold at least in the special case $B=\dbox$.

\begin{prop} \label{prop:Cartes-lSdim-upper}
Let $F\subset\R$ be compact. Then
$$\ldim_S(F\times \dbox)=\ldim_S F+d-1.$$ 
\end{prop}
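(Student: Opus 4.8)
The inequality ``$\ge$'' is already contained in Proposition~\ref{prop:Cartes-lSdim} (applied with $B=\dbox$, which has $\lambda_{d-1}(\dbox)=1>0$), so the whole problem reduces to proving the reverse estimate $\ldim_S(F\times\dbox)\le\ldim_S F+d-1$, or equivalently, in view of \eqref{eq:altdimS},
$$
\liminf_{r\to0}\frac{\log\Ha^{d-1}(\bd(F\times\dbox)_r)}{-\log r}\le\liminf_{r\to0}\frac{\log\Ha^{0}(\bd F_r)}{-\log r}=\ldim_S F.
$$
The plan is to derive an explicit upper bound for the surface area $\Ha^{d-1}(\bd(F\times\dbox)_r)$ and to show that, along a sequence of radii realising the liminf on the right, it is dominated by $\Ha^0(\bd F_r)$.

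The starting point is the elementary observation that the two factors decouple in the distance function: writing points of $\R^d=\R\times\R^{d-1}$ as $(x,y)$, one has $\dist((x,y),F\times\dbox)^2=\dist(x,F)^2+\dist(y,\dbox)^2$, so that $(x,y)\in(F\times\dbox)_r$ if and only if $\dist(x,F)^2+\dist(y,\dbox)^2\le r^2$. First I would use this together with Fubini to express the volume $\lambda_d((F\times\dbox)_r)$ as $\int_{F_r}P(\sqrt{r^2-\dist(x,F)^2})\,dx$, where $P(t):=\lambda_{d-1}(\dbox_t)$ is the (Steiner) polynomial of the unit cube, with $P(0)=1$. Passing to the codistance variable $t=\dist(x,F)$ and using that $\tfrac{d}{dt}\lambda_1(F_t)=\Ha^0(\bd F_t)$ turns this into $\int_0^r P(\sqrt{r^2-t^2})\,\Ha^0(\bd F_t)\,dt$. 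Differentiating in $r$ (Stach\'o's theorem, $\Ha^{d-1}(\bd A_r)=\frac{d}{dr}\lambda_d(A_r)$) then yields the key identity
$$
\Ha^{d-1}(\bd(F\times\dbox)_r)=\Ha^0(\bd F_r)+r\int_0^r\frac{P'(\sqrt{r^2-t^2})}{\sqrt{r^2-t^2}}\,\Ha^0(\bd F_t)\,dt.
$$
Alternatively one can read off the same identity geometrically: the part of $\bd(F\times\dbox)_r$ lying over the interior of the cube is the ``flat'' piece $\bd F_r\times\dbox$, of $\Ha^{d-1}$-measure $\Ha^0(\bd F_r)\cdot\lambda_{d-1}(\dbox)=\Ha^0(\bd F_r)$, while the ``rounded collar'' over $\dbox_r\setminus\dbox$ contributes the integral, which one evaluates by the coarea formula applied to the function $(x,y)\mapsto\dist(x,F)$ restricted to the surface.

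The second term $I(r)$ above is the error to be controlled. Writing $n(t):=\Ha^0(\bd F_t)$, I would bound $P'$ by a constant near $0$, substitute $t=r\sin\alpha$ to get $I(r)\le C\,r\int_0^{\pi/2}n(r\sin\alpha)\,d\alpha$, and then split the $\alpha$-integral: on $[\pi/6,\pi/2]$ the monotonicity $n(r\sin\alpha)\le n(r/2)$ gives a term $\le C\,r\,n(r/2)$, while on $[0,\pi/6]$ substituting back together with $\int_0^{r/2}n(t)\,dt=\lambda_1(F_{r/2})=V(F_{r/2})$ gives a term $\le C\,V(F_{r/2})$. Hence $I(r)\le C\big(r\,n(r/2)+V(F_{r/2})\big)$, and since $V(F_{r/2})$ stays bounded as $r\to0$ this piece contributes only log-log order $0$.

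It remains to compare $r\,n(r/2)$ with $n(r)$. Here the decisive point is that the collar is thin: the extra factor $r$ beats the growth of $n$, because $\udim_S F=\limsup_{r\to0}\frac{\log n(r)}{-\log r}\le1$ (the number of components of $F_r$ is at most $O(1/r)$). Thus for every $\eps>0$ one has $r\,n(r/2)\le C_\eps\,r^{1-\udim_S F-\eps}$, whose log-log order is $\udim_S F-1+\eps\le\eps$, so that for each fixed $\eps>0$,
$$
\Ha^{d-1}(\bd(F\times\dbox)_r)\le n(r)+C_\eps\,r^{1-\udim_S F-\eps}+C.
$$
Evaluating along a sequence $r_k\to0$ with $\frac{\log n(r_k)}{-\log r_k}\to\ldim_S F$ and using $\log(a+b+c)=\log\max(a,b,c)+O(1)$, the right-hand side has log-log order $\max(\ldim_S F,\eps)$; letting $\eps\downarrow0$ gives $\liminf_{r\to0}\frac{\log\Ha^{d-1}(\bd(F\times\dbox)_r)}{-\log r}\le\ldim_S F$, as required. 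I expect the main obstacle to be precisely this last comparison: because the liminf is attained only along a subsequence, one cannot simply add the dimensions term by term, and one must verify that shifting the scale by the factor $\tfrac12$ inside the collar integral --- which could a priori land on radii where $n$ is of the much larger order $\udim_S F$ --- is harmless; this is guaranteed exactly by the thin-collar factor $r$ and the bound $\udim_S F\le1$.
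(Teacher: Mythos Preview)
Your argument is correct in substance and arrives at the same inequality as the paper, namely $\Ha^{d-1}\big(\partial(F\times\dbox)_r\big)\le C\,\Ha^0(\partial F_r)+C'$ for small $r$, but the route is genuinely different.

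The paper never differentiates the volume. It decomposes $\R^d$ into the vertical strips $F\times\R^{d-1}$, $(\R\setminus I)\times\R^{d-1}$ and $I_j\times\R^{d-1}$ (one for each bounded complementary interval $I_j$ of $F$) and computes $\Ha^{d-1}(\partial\tilde F_r)$ in each strip by elementary geometry (for $d=2$: two straight segments plus two circular arcs over each $I_j$ with $l_j>2r$, and a short arc bounded by $\pi l_j$ when $l_j\le 2r$). Summing gives $\Ha^{d-1}(\partial\tilde F_r)\le \tfrac{c_2}{2}\Ha^0(\partial F_r)+c_1+c_3L$, and the constants are absorbed once $\Ha^0(\partial F_r)$ is large. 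This is entirely hands-on: no Stach\'o, no differentiation under the integral, no a.e.\ qualifiers.

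Your Fubini/Stach\'o derivation is slicker to write down but carries some bookkeeping you glossed over. First, the coarea step produces an additional term $P'(r)\,\lambda_1(F)$ from the level set $\{\dist(\cdot,F)=0\}=F$ (and likewise $\int_0^{r/2}n(t)\,dt=V(F_{r/2})-\lambda_1(F)$, not $V(F_{r/2})$); the proposition does not assume $\lambda_1(F)=0$, though both corrections are bounded and harmless. Second, both Stach\'o's identity and the Leibniz step hold only for a.e.\ $r$ (at continuity points of $n$), so your pointwise bound is an a.e.\ bound; when you ``evaluate along $r_k$'' you should note that the minimising sequence can be taken inside this full-measure set because $n$ is monotone. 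Third, your final paragraph makes the comparison of $r\,n(r/2)$ with $n(r)$ look delicate, but it is not: the same packing observation you invoke ($n(t)\le \diam F/t+2$) already gives $r\,n(r/2)\le 2\,\diam F+2r$, so $I(r)$ is simply bounded and the $\eps$--$\udim_S F$ detour is unnecessary. With that simplification your estimate reads $\Ha^{d-1}(\partial\tilde F_r)\le n(r)+O(1)$, exactly the paper's conclusion.

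What each approach buys: the paper's slicing is shorter, fully rigorous as written, and makes the geometry transparent; your integral formula is more systematic and would adapt more readily if $\dbox$ were replaced by a general convex body, at the cost of the regularity caveats above.
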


\begin{proof}
In view of Proposition~\ref{prop:Cartes-lSdim}, it remains to prove the `$\le$'-relation. 
Let $\sL=(l_j)_{j=1}^\infty$ be the fractal string associated to $F$ encoding the lengths of the bounded complementary intervals $I_j$ of $F$. Clearly, we have $L:=\sum_{j=1}^\infty l_j =\lambda_1(I\setminus F)<\infty$, where $I$ is the convex hull of $F$. Recall that by definition of $\sL$, $l_1\ge l_2\ge l_3\ge\ldots\ge 0$. We can assume that there are infinitely many $l_j$'s different from zero. Otherwise $F$ is a finite union of intervals and singletons and the statement is obvious. 

To illustrate the idea, we will first discuss the case $d=2$. The proof in higher dimensions is similar and will be addressed afterwards.
First observe that the boundary length of $(F\times [0,1])_r\subset\R^2$ does only depend on $\sL$ (and on $\lambda_1(F)$) but not on the set $F$ itself. Indeed, this is easily seen by slicing $\R^2$ in the direction of the second coordinate and computing the measure of $\partial (F\times [0,1])_r$ in each slice separately. We have the disjoint union
$$
\R^2=(F\times\R)\cup (\R\setminus I\times\R) \cup \bigcup_{j=1}^\infty (I_j\times \R).
$$ 
In the slices of this decomposition we have, for each $r>0$, 
\begin{align}
\Ha^1\left(\partial (F\times [0,1])_r \cap(F\times\R)\right)&=2\lambda_1(F)\label{eqn:Ha1}\\
\Ha^1\left(\partial (F\times [0,1])_r \cap (I^c\times\R)\right)&=2+2\pi r\label{eqn:Ha2}
\end{align}
and 
\begin{align}\label{eqn:Ha3}
\Ha^1\left(\partial (F\times [0,1])_r \cap (I_j\times\R)\right)
&=\left\{
\begin{array}{ll} 
2+2\pi r& \text{ if } l_j>2r\\ 
4r\arcsin(\frac{l_j}{2r}) & \text{ if } l_j\le 2r
\end{array} 
\right.
\end{align}
Since $\arcsin(x)\le \frac\pi 2 x$ for $x\in[0,1]$, the last expression is bounded from above by $\pi l_j$.
Hence, writing $\tilde F:=F\times [0,1]$, we get
\begin{align*}
\Ha^1(\bd \tilde F_r)&= \Ha^1(\bd \tilde F_r\cap(F\times\R))+\Ha^1(\bd \tilde F_r\cap(I^c\times\R))\\
&\qquad + \sum_{j:l_j>2r}\Ha^1(\bd \tilde F_r\cap(I_j\times\R)) +\sum_{j: l_j\le 2r}\Ha^1(\bd \tilde F_r\cap(I_j\times\R))\\
&\le 2\lambda_1(F)+2(1+\pi r)+\sum_{j: l_j>2r} 2(1+\pi r) +\pi \sum_{j: l_j\le 2r} l_j.
\end{align*}
Now observe that $\Ha^0(\bd F_r)=2+2\cdot\#\{j:l_j>2r\}$ and that $\Ha^0(\bd F_r)\to\infty$ as $r\to 0$, which is due to the assumption that infinitely many $l_j$'s are non-zero. Moreover, the last sum is bounded from above by $\pi L$. 
Hence
\begin{align*}
\Ha^1(\bd \tilde F_r)&\le \Ha^0(\bd F_r) (1+\pi r) +2\lambda_1(F)+\pi L\\
&\le 3 \Ha^0(\bd F_r),
\end{align*}
provided $r$ is sufficiently small (namely such that $\pi r\le 1$ and $\Ha^0(\bd F_r)\ge2\lambda_1(F)+\pi L$).  
Taking logarithms and dividing by $-\log r$, we get 
\begin{align*}
\frac{\log \Ha^1(\bd \tilde F_r)}{-\log r} &\le \frac{\log \Ha^0(\bd F_r)+\log 3}{-\log r}.
\end{align*}
Thus
\begin{align*}
\ldim_S(F\times [0,1])&=1+\liminf_{r\to 0}\frac{\log \Ha^{d-1}(\partial (F\times [0,1])_r)}{- \log r}\\
&\le 1 + \liminf_{r\to 0}\frac{\log \Ha^{0}(\partial F_r)}{- \log r}=1 + \ldim_S F,
\end{align*}
which completes the proof for the case $d=2$. 

For $d>2$, the formulas \eqref{eqn:Ha1} -- \eqref{eqn:Ha3} are different, but the arguments are essentially the same.
Setting $\tilde F:=F\times\dbox$, for $r>0$, we have 
\begin{align}
\Ha^{d-1}\left(\partial \tilde F_r \cap(F\times\R^{d-1})\right)&=\Ha^{d-2}(\partial (\dbox)_r)\cdot \lambda_1(F)\,,\tag{4.1'}\label{eqn:Ha1'}\\
\Ha^{d-1}\left(\partial \tilde F_r \cap (I^c\times\R)\right)&=\Ha^{d-1}\left(\partial (\{0\}\times \dbox)_r\right)\,,\tag{4.2'}\label{eqn:Ha2'}\\
\Ha^{d-1}\left(\partial \tilde F_r \cap (I_j\times\R)\right)
&=\Ha^{d-1}\left(\partial (\{0\}\times \dbox)_r\right)\,, & \text{ if } l_j>2r\,, \tag{4.3'} \label{eqn:Ha3'}\\
\intertext{and} 
\Ha^{d-1}\left(\partial \tilde F_r \cap (I_j\times\R)\right)
&\le \Ha^{d-2}(\partial (\dbox)_r) \pi l_j\,,  &\text{ if } l_j\le 2r\,. \tag{4.3''} \label{eqn:Ha3"}
\end{align}
It is now important to note that all these expressions are bounded from above by constants which depend on $d$ (and $F$) but not on $r\in(0,1]$. More precisely, \eqref{eqn:Ha1'} is bounded by some constant $c_1=c_1(d,F)$, \eqref{eqn:Ha2'} and \eqref{eqn:Ha3'} by some constant $c_2=c_2(d)$ and \eqref{eqn:Ha3"} by $c_3\cdot l_j$ for some constant $c_3=c_3(d)$. Hence
\begin{align*}
\Ha^{d-1}\left(\partial \tilde F_r\right)&\le c_1+  c_2 +\sum_{j: l_j>2r} c_2 +c_3 \sum_{j: l_j\le 2r} l_j\\
&\le \frac{c_2}2 \Ha^{0}(\partial F_r)+ c_1+c_3 L\\
&\le \left(\frac{c_2}2+1\right) \Ha^{0}(\partial F_r)\,,
\end{align*}
provided $r$ is sufficiently small. From this inequality, the assertion for $d\ge 3$ follows as in the case $d=2$ above. 
\end{proof}

\begin{proof}[Proof of Theorem~\ref{thm:Fd}]
Combining Proposition~\ref{prop:Cartes-Mdim} and Theorem~\ref{thm:F}, we conclude that the set $F_d = F_d(s,m)\subset\R^d$ has 
$\udim_M F_d = q\cdot s +d-1$ and $\ldim_M F_d=m+d-1$. Since $\lambda_1(F)=0$, Corollary~\ref{cor:Cartes-uSdim} implies immediatly that also $\udim_S F_d = q\cdot s +d-1$. Finally, from Proposition~\ref{prop:Cartes-lSdim-upper}, we get $\ldim_S F_d = s+d-1$, which completes the proof.   
\end{proof}

\end{document}